\definecolor{Blue}{HTML}{3498DB}
\pgfplotsset{compat=newest}
\newtheorem{thm}{Theorem}[section]
\newtheorem{lem}[thm]{Lemma}
\newtheorem{cor}[thm]{Corollary}
\newtheorem{thmalpha}{Theorem}
\newtheorem{lemalpha}[thmalpha]{Lemma}
\newtheorem{conj}[thm]{Conjecture}
\theoremstyle{definition}
\theoremstyle{remark}
\newtheorem{remark}[thm]{Remark}
\numberwithin{equation}{section}
\newcommand{\C}{\mathbb{C}} 
\newcommand{\R}{\mathbb{R}} 
\newcommand{\N}{\mathbb{N}} 
\newcommand{\D}{\mathbb{D}}
\newcommand{\T}{\mathbb{T}}
\newcommand{\Hol}{\mathcal{H} \left( \mathbb{D} \right)}
\begin{document}


	\title[On a conjecture about contractive inequalities]{On a conjecture about contractive inequalities for weighted Bergman spaces}


	\author{Adrián Llinares}
	\address{Departamento de Matemáticas, Universidad Autónoma de Madrid, 28049 Madrid, Spain}
	\email{adrian.llinares@uam.es}
	
	\date{\today}
	\subjclass[2020]{30H10, 30H20, 47A30, 47B91}
	\keywords{Extremal problems, norm of inclusions, Hardy spaces, weighted Bergman spaces, contractive inclusions}



	\begin{abstract}
		We are interested in the norm of the inclusion between the standard weighted Bergman spaces $A^2_\alpha$ and $A^p_{\frac{p}{2} (\alpha + 2)-2}$, $p > 2$, which is conjectured to be contractive by O.F. Brevig, J. Ortega-Cerdà, K. Seip and J. Zhao (in the limit case $\alpha = -1$) and E. Lieb and J.P. Solovej (for $\alpha > -1$). We provide some partial answers and improvements of some recent results.
	\end{abstract}


	\maketitle


	
	\section{Introduction}
	
		\subsection{Notation and definitions}
		
			As is usual, let $\D$ and $\T$ denote the open unit disk of $\C$ and its boundary, respectively. Let $\Hol$ be the set of all holomorphic functions in $\D$. For $p > 0$, we will write the integral $p$-means of $f \in \Hol$ as
			\[
				M_p(r, f) := \left( \dfrac{1}{2\pi} \int_0^{2\pi}  \left | f \left( r e^{it} \right) \right |^p \: dt \right)^\frac{1}{p}, \quad r \in (0, 1), \\
			\]
			and the Hardy space $H^p$ is defined as
			\[
				H^p := \left \{ f \in \Hol \: : \: \| f \|_{H^p} := \sup_{r \in (0,1)} \{ M_p(r,f) \} < \infty\right \}.
			\]
			
			Let $dA(z):= \frac{1}{\pi} \: dx \, dy$ be the normalized Lebesgue area measure of $\D$. For $p > 0$ and $\alpha > -1$, we define the standard weighted Bergman space $A^p_\alpha$ as the set of all analytic functions $f$ such that
			\[
				\| f \|_{A^p_\alpha} := \left( \left( \alpha + 1 \right ) \int_\D (1 - |z|^2)^\alpha |f(z)|^p \: dA(z) \right)^\frac{1}{p} < \infty.
			\]
			
			If $\alpha = 0$, we will simply write $A^p$ instead of $A^p_0$.
			
			It is widely known that Hardy spaces are the limit of weighted Bergman spaces in the sense that
				\begin{equation} \label{LimApalpha}
					\lim_{\alpha \rightarrow -1^+} \| f \|_{A^p_\alpha} = \| f \|_{H^p},
				\end{equation}
				and consequently from now on we will understand the space $A^p_{-1}$ as $H^p$ (for example, see \cite{MR2076585}).
			
			Point evaluations are bounded functionals in $A^p_\alpha$, and the following sharp inequality
				\begin{equation} \label{AcotacionApalpha}
					| f (z) | (1 - |z|^2)^\frac{\alpha + 2}{p} \leq \| f \|_{A^p_\alpha}, \quad \forall z \in \D,
				\end{equation}
				holds (see \cite{MR1120512}, for example). Moreover, we know that, for a fixed $f$, the left-hand side of \eqref{AcotacionApalpha} tends uniformly to 0 when $z$ tends to the boundary.
				
			For every $\alpha \geq -1$ the space $A^2_\alpha$ is actually a Hilbert space and its norm can be computed in terms of the Taylor coefficients as follows
				\[
					\| f \|_{A^2_\alpha}^2 = \sum_{n = 0}^\infty \dfrac{|a_n|^2}{c_{\alpha + 2} (n)},
				\]
				where $c_{\beta} (n) := \binom{n + \beta - 1}{n} = \frac{\Gamma (n + \beta)}{\Gamma(\beta) n!}$. Naturally, here $\Gamma$ denotes the Euler Gamma function,
				\[
					\Gamma(x) := \int_0^\infty t^{x-1} e^{-t} \: dt, \quad x > 0.
				\]
			This special function and Euler's Beta function,
				\[
					B(x,y) := \int_0^1 t^{x-1} (1 - t)^{y - 1} \: dt = \dfrac{\Gamma(x) \Gamma(y)}{\Gamma(x + y)}, \quad x, y > 0,
				\]
				will be used throughout this paper.
			
			The reproducing kernel of $A^2_\alpha$,
				\[
					K_\zeta (z) = \dfrac{1}{(1 - \overline{\zeta}z)^{\alpha + 2}}, \quad \zeta \in \D,
				\]
				is usually called the \emph{Bergman kernel} (or \emph{Szeg\H{o} kernel} in the particular case $\alpha = - 1$).
		
		\subsection{Statement of the problem and motivation}
		
			The starting point of this work is the so called \emph{Carleman's inequality}, 
				\begin{equation} \label{Carleman}
					\| f \|_{A^{2p}} \leq \| f \|_{H^p}.
				\end{equation}
				That is, the inclusion of $H^p$ in $A^{2p}$ is contractive. Furthermore, equality is possible if and only if there exists $c \in \C$ and $\zeta \in \D$ such that $f(z) = c (1 - \overline{\zeta} z)^{-\frac{2}{p}}$.
					
			 	Regarding the history of this problem, G.H. Hardy and J.E. Littlewood proved the inclusion of $H^p$ in a range of spaces that today we name as mixed norm spaces (for example, see \cite[Theorem 31]{MR1545260} or \cite[Theorem 5.11]{MR0268655}), including $A^{q}_{\frac{q}{p}-2}$ for $p < q$. However, it seems that T. Carleman \cite{MR1544458} was the first author who deduced \eqref{Carleman}, although his proof was not complete since Riesz's factorization had not been published yet. See \cite{MR577467}, \cite[Theorem 19.9]{MR743423} or \cite{MR1984405}, for example, for later references.
					
			In 1987, J. Burbea \cite{MR882113} extended Carleman's inequality to a larger class of standard weighted Bergman spaces. More specifically,
					\begin{equation}\label{Burbea}
						\| f \|_{A^{kp}_{k-2}} \leq \| f \|_{H^p},
					\end{equation}
					if $k$ is a positive integer and the unique extremal functions are precisely the same extremal functions as in Carleman's inequality.
						
			Heavily inspired by an application of such inequalities to Hardy spaces of Dirichlet series found by H. Helson \cite{MR2263964}, O.F. Brevig, J. Ortega-Cerdà, K. Seip and J. Zhao conjectured that \eqref{Burbea} should hold for any $k > 1$, not necessarily an integer. In fact, it seems that this question has been of interest to experts since even before (\emph{e.g.}, M. Pavlovi\'{c} collected it as an open question in his monograph of 2014 \cite[Problem 2.1]{MR3154590}), but it was in \cite{MR3858278} where an overwhelming evidence in favour of this conjecture was shown.
					
			After a suitable normalization using Riesz's factorization, their conjecture can be stated as follows.
					
			\begin{conj}[Brevig, Ortega-Cerdà, Seip, Zhao \cite{MR3858278}]\label{ConjectureBOSZ}
				If $p > 2$, then
					\[
						\| f \|_{A^p_{\frac{p}{2} - 2}} \leq \| f \|_{H^2}, \quad \forall f \in H^2,
					\]
					and equality is attained if and only if $f$ is a constant multiple of the Szeg\H{o} kernel.
			\end{conj}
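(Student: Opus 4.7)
The plan is to follow the scheme that succeeds for even integer $p$ in Burbea's theorem and to isolate the difficulty of the non-integer case.

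\emph{Step 1 (Extremal verification).} I would first confirm that equality holds on the conjectured extremal family. For $f = S_\zeta(z) = (1-\bar\zeta z)^{-1}$ the identity $|S_\zeta|^p = |K^{p/2-2}_\zeta|^2$, where $K^\alpha_\zeta(z) = (1-\bar\zeta z)^{-\alpha-2}$ is the Bergman reproducing kernel of $A^2_\alpha$, combined with the reproducing property yields
\[
\|S_\zeta\|_{A^p_{p/2-2}}^p = \|K^{p/2-2}_\zeta\|_{A^2_{p/2-2}}^2 = K^{p/2-2}_\zeta(\zeta) = (1-|\zeta|^2)^{-p/2} = \|S_\zeta\|_{H^2}^p,
\]
so the sharp constant is indeed $1$ and the extremals are exactly those asserted.

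\emph{Step 2 (Reduction and coefficient reformulation).} By Riesz factorization $f = BF$ with $B$ a Blaschke product and $F \in H^2$ zero-free, the bounds $|B(z)| \leq 1$ in $\D$ and $|B|=1$ a.e.\ on $\T$ give $\|f\|_{A^p_{p/2-2}} \leq \|F\|_{A^p_{p/2-2}}$ and $\|f\|_{H^2} = \|F\|_{H^2}$, so one may assume $f$ zero-free. Then $g := f^{p/2}$ is a well-defined holomorphic function with $|g|^2 = |f|^p$, and the conjecture reformulates as
\[
\sum_{n=0}^\infty \frac{|\hat g(n)|^2}{c_{p/2}(n)} = \|g\|_{A^2_{p/2-2}}^2 \leq \|g^{2/p}\|_{H^2}^p.
\]
When $p = 2k$ is an even integer, $g$ is a $k$-th power of an $H^2$ function, whose coefficients admit a multinomial expansion; this underlies Burbea's proof of \eqref{Burbea}.

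\emph{Step 3 (Main obstacle and candidate strategies).} For non-integer $p/2$ the multinomial structure of Step 2 is lost, and direct Riesz--Thorin interpolation between $p=2$ (trivial via \eqref{LimApalpha}) and $p=2k$ is blocked because the weight $(1-|z|^2)^{p/2-2}$ itself depends on $p$. My preferred attack is monotonicity in $p$: prove that $p \mapsto \|f\|_{A^p_{p/2-2}}$ is non-increasing on $[2,\infty)$ for every fixed $f \in H^2$, a statement consistent with the observation that along Szegő kernels this function is constantly $(1-|\zeta|^2)^{-1/2}$. Differentiating in $p$ after normalizing $d\mu_p := \|f\|_{A^p_{p/2-2}}^{-p}(1-|z|^2)^{p/2-2}|f(z)|^p \, dA(z)$ to a probability measure reduces the matter to a Jensen-type inequality for $\log[(1-|z|^2)|f(z)|^2]$ averaged against $d\mu_p$; the crux is the lack of a pointwise sign for this integrand, which I would try to handle by exploiting the subharmonicity of $\log|f|$ and a clever identification of the extremal density. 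A complementary line would be a Stein-type analytic family with the weight exponent treated as a complex parameter and a Phragmén--Lindelöf argument anchored at the integer values where Burbea's inequality is known.

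\emph{Step 4 (Equality case).} In either approach the rigidity follows by tracing equality through the reductions: equality in the Blaschke step forces $B$ to be a unimodular constant, and equality in the coefficient inequality forces $g$ to be proportional to $K^{p/2-2}_\zeta$, so that $f$ is a constant multiple of $S_\zeta$, as asserted. The decisive hard step is Step 3; the rest is bookkeeping.
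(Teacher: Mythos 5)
You have not proved the statement, and neither does the paper: this is an open conjecture, and the paper only supplies partial results around it. Your Steps 1, 2 and 4 are correct but routine (equality on the Szeg\H{o} kernels, reduction to zero-free $f$ via Riesz factorization, and the rigidity bookkeeping); the entire content is concentrated in Step 3, which you yourself call ``the decisive hard step'' and which contains no argument, only a list of candidate strategies. Worse, your preferred strategy --- monotonicity of $p \mapsto \|f\|_{A^p_{p/2-2}}$ on $[2,\infty)$ --- is strictly stronger than the conjecture and is explicitly identified at the end of the paper as ``another open (and harder) problem''; the best known result in that direction (Bayart, Brevig, Haimi, Ortega-Cerd\`a and Perfekt) only gives monotonicity along steps of length $\frac{2}{\alpha+2}$, i.e.\ steps of length $2$ in the Hardy-space case $\alpha=-1$, which recovers nothing beyond Burbea's even-integer inequality. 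The Jensen-type inequality you reduce to has, as you concede, no pointwise sign, and you offer no mechanism to overcome that; the Stein interpolation alternative is likewise only named, not executed.

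For comparison, the paper takes a more modest but complete route to what it actually claims: (i) the norm of the inclusion is unchanged if one restricts to functions whose Taylor coefficients vanish on any prescribed finite set (Theorem \ref{Vanishing}), a reduction that goes \emph{against} the Riesz-factorization normalization you use in Step 2; (ii) an equivalent reformulation of contractivity via the Hardy--Stein identity (Theorem \ref{Equivalence}), from which the even-integer case follows by induction and Cauchy--Schwarz on Taylor coefficients; and (iii) the explicit bound $\|f\|_{A^p_{\frac{p}{2}(\alpha+2)-2}} \leq C(p)\, \|f\|_{A^2_\alpha}$ with $C(p) \leq \sqrt[e]{e}/\sqrt{2} = 1.021\ldots$ (Corollary \ref{UniformBound}), obtained by H\"older interpolation between consecutive even integers \emph{after} annihilating finitely many Taylor coefficients --- precisely the device that turns the interpolation you describe as ``blocked'' into a constant close to, but not equal to, $1$. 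If you wish to salvage something from your proposal, the realistic target is an improvement of $C(p)$, not the conjecture itself.
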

					
			\begin{remark}
				The original statement of Conjecture \ref{ConjectureBOSZ} in \cite{MR3858278} is
					\begin{equation} \label{OriginalBOSZ}
						\sqrt{\sum_{n = 0}^\infty \dfrac{|a_n|^2}{c_{2/q}(n)}} = \| f \|_{A^2_{\frac{2}{q}-2}} \leq \| f \|_{H^q},
					\end{equation}
					if $\{ a_n \}_{n \geq 0}$ are the Taylor coefficients of $f$ and $0 < q \leq 2$.
							
				Recently, A. Kulikov \cite{MR4275085} proved that there exists $\delta = \delta(q) > 0$ such that
					\[
						\sqrt{|a_0|^2 + \dfrac{q}{2} |a_1|^2 + \delta \sum_{n = 2}^\infty \dfrac{|a_n|^2}{(n+1)^{\frac{2}{q} - 1}} }\leq \| f \|_{H^q}.
					\]
						
				This implies that \eqref{OriginalBOSZ} holds, at least, for the first two coefficients.
			\end{remark}
					
			With a view to applications in Mathematical Physics, E. Lieb and J.P. Solovej stated that a similar question to Conjecture \ref{ConjectureBOSZ} should exist for any $A^2_\alpha$ (see \cite{Lieb_Solovej_2021}, for example).
					
			\begin{conj}[Lieb, Solovej]\label{ConjLiebSolovej}
				Let $\alpha > -1$. If $p > 2$, then
					\[
						\| f \|_{A^p_{\frac{p}{2}(\alpha + 2) - 2}} \leq \| f \|_{A^2_\alpha}, \quad \forall f \in A^2_\alpha.
					\]
					In addition, equality is possible if and only if $f$ is a constant multiple of the Bergman kernel.
			\end{conj}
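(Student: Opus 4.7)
The plan is as follows. First, verify that the conjectured extremizers actually saturate the inequality, which makes the statement self-consistent. For $\zeta \in \D$, the reproducing property gives $\|K_\zeta\|_{A^2_\alpha}^2 = K_\zeta(\zeta) = (1-|\zeta|^2)^{-(\alpha+2)}$. Setting $\gamma := \beta+2 = \frac{p}{2}(\alpha+2)$ with $\beta = \frac{p}{2}(\alpha+2)-2$, one has
\[
\|K_\zeta\|_{A^p_\beta}^p = (\beta+1)\int_\D (1-|z|^2)^\beta |1-\overline{\zeta}z|^{-2\gamma} \, dA(z) = (1-|\zeta|^2)^{-\gamma},
\]
where the last equality is the reproducing property of the Bergman kernel of $A^2_{\gamma-2}$. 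Since $(1-|\zeta|^2)^{-\gamma} = \|K_\zeta\|_{A^2_\alpha}^p$, equality indeed holds at the kernels.

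Next, I would attack the integer subcase $p = 2k$ by generalizing Burbea's argument from $\alpha = -1$ to arbitrary $\alpha > -1$. Writing $|f|^{2k} = |f^k|^2$ and $f^k(z) = \sum_m b_m z^m$ with $b_m = \sum_{n_1+\dots+n_k=m} a_{n_1}\cdots a_{n_k}$, the Hilbert-space formula for $A^2_{k(\alpha+2)-2}$ turns the inequality into the purely coefficient-level statement
\[
\sum_{m=0}^\infty \frac{|b_m|^2}{c_{k(\alpha+2)}(m)} \leq \left(\sum_{n=0}^\infty \frac{|a_n|^2}{c_{\alpha+2)}(n)}\right)^k.
\]
After an application of Cauchy--Schwarz to each $b_m$, this reduces to a combinatorial inequality relating $\prod_i c_{\alpha+2}(n_i)$ to $c_{k(\alpha+2)}(n_1+\dots+n_k)$, i.e., a Gamma-function identity whose shift by $\alpha+2$ should follow by iterated use of the Beta integral, exactly as in Burbea's original computation at $\alpha=-1$.

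For non-integer $p$ the algebraic trick $|f|^p = |f^{p/2}|^2$ fails, since $f^{p/2}$ is generally not single-valued on $\D$. I would therefore consider alternative routes: (a) compute or sharply estimate the operator norm of the inclusion by testing against the kernels $K_\zeta$, in order to improve the currently known upper bounds $C(p,\alpha)$ and push them toward $1$; (b) a monotonicity argument in $p$, starting from the trivial equality at $p=2$ and controlling $\frac{d}{dp}\|f\|_{A^p_{\beta(p)}}^p$ using the uniform decay of point evaluations in \eqref{AcotacionApalpha}; (c) a conditional result for low-degree polynomials, reducing the inequality to a finite-dimensional problem treatable by direct calculation.

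The principal obstacle is precisely the passage to non-integer $p$: every handle on the integer case exploits that $|f|^{2k}$ is the squared modulus of an analytic function, together with the combinatorics of Taylor coefficients, and neither ingredient survives in general. Any genuine proof must bypass this algebraic structure, probably via a sharp rearrangement argument compatible with the weights $(1-|z|^2)^\alpha$ or via an interpolation scheme coupling $p$ and the weight $\beta(p)$ simultaneously. Accordingly, I would expect the realistic output of this programme to remain partial: a generalized Burbea-type theorem in the integer case, improved constants tending to the conjectural value, or verification of the conjecture on restricted classes of functions.
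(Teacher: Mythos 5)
The statement you are asked about is a \emph{conjecture}: the paper itself offers no proof of it, and explicitly says so (``Unfortunately we are not able to prove the conjectures, but we can deduce some partial advances''). Your proposal is, accordingly, not a proof either --- and to your credit you say as much in the final sentence. So the honest verdict is that there is a gap, and the gap is the entire content of the conjecture for non-integer $p$: none of your routes (a)--(c) is carried out, and each of them is known to be hard (testing against kernels cannot by itself give an upper bound for the operator norm; monotonicity in $p$ of $\| f \|_{A^p_{\frac{p}{2}(\alpha+2)-2}}$ is itself an open problem, which the paper records at the end, the best known result being the Bayart--Brevig--Haimi--Ortega-Cerd\`a--Perfekt theorem giving monotonicity only in steps of length $\frac{2}{\alpha+2}$). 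The parts you do carry out are correct: the computation showing that the normalized Bergman kernels saturate the inequality is right (it is the reproducing identity for $A^2_{\beta}$ with $\beta = \frac{p}{2}(\alpha+2)-2$), and the integer case $p=2k$ is indeed a theorem; it is the paper's ``Extended Carleman's inequality''.

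Where your sketch of the integer case differs from the paper: you propose a one-shot $k$-fold Cauchy--Schwarz on $b_m=\sum_{n_1+\cdots+n_k=m}a_{n_1}\cdots a_{n_k}$ reducing everything to the Gamma-function identity $\sum_{n_1+\cdots+n_k=m}\prod_i c_{\alpha+2}(n_i)=c_{k(\alpha+2)}(m)$; this is essentially Burbea's original route and it works, but you should be aware that the naive $k$-fold Cauchy--Schwarz with weights $\prod_i c_{\alpha+2}(n_i)$ gives only $\|f^k\|_{A^2_{k(\alpha+2)-2}}\le\|f\|_{A^2_\alpha}^k$, i.e.\ the inequality without any information about the derivative term, and one must check the bookkeeping of the factor $\frac{n}{n+k(\alpha+2)}\le 1$ to close the argument. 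The paper instead proves the integer case by induction on $k$, combining the Hardy--Stein reformulation $\| f \|^{2(k+1)}_{A^{2(k+1)}_{(k+1)(\alpha+2)-2}} = |f(0)|^{2(k+1)} + I_{k+1}$ with a single Cauchy--Schwarz on the convolution $c_n=\sum_j a_j b_{n-j}$ and the identity \eqref{BetaBinom}; this inductive structure is what later feeds into the interpolation argument of Corollary \ref{UniformBound} (H\"older between consecutive even integers), which is how the paper obtains its explicit constant $C(p)\le e^{1/e}/\sqrt{2}$ for all $p>2$. If you want your programme to produce the paper's actual partial results, the missing ingredients are: the reduction to functions vanishing to high order at the origin (Theorem \ref{Vanishing}), the Hardy--Stein rewriting of the norm and the resulting equivalences (Theorem \ref{Equivalence}), and the H\"older interpolation between $p=2(k-1)$ and $p=2k$.
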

					
			Recalling that $H^2$ can be understood as $A^2_{-1}$, we are going to provide a unified strategy in order to study Conjectures \ref{ConjectureBOSZ} and \ref{ConjLiebSolovej}. Indeed, \eqref{LimApalpha} yields that the inclusion of $H^2$ in $A^p_{\frac{p}{2}-2}$ is contractive if Conjecture \ref{ConjLiebSolovej} is true for every $\alpha > -1$. Although the possible converse of this statement is not straightforward, there appears to be a strong feeling among the experts that a possible solution of Conjeture \ref{ConjectureBOSZ} could lead to a solution of Conjecture \ref{ConjLiebSolovej} (and, of course, vice versa).
			
			It should be noted that contractive inequalities for inclusions between weighted Bergman spaces hold in a number of other cases which are easier to analyse. This question has been recently addressed in \cite{llinares2021contractive}.
		
		\subsection{Main results}
		
			Unfortunately we are not able to prove the conjectures, but we can deduce some partial advances. More specifically, the main results of this paper can be listed as follows:
				\begin{itemize}
					\item If $\Lambda \subset \N \cup \{ 0 \}$ is a finite set, then
						\[
							\sup_{\| f \|_{A^2_\alpha} \leq 1} \left \{ \| f \|_{A^p_{\frac{p}{2}(\alpha + 2) - 2}} \right \} = \sup \left \{ \| f \|_{A^p_{\frac{p}{2}(\alpha + 2) - 2}} \: : \: \| f \|_{A^2_\alpha} \leq 1, f^{(j)}(0) = 0, j \in \Lambda \right \}.
						\]
						
					\item Using the notation
						\[
							\begin{array}{c}
								F_p(f)  :=  |f(0)|^p + \frac{p}{2} \int_\D |f'(z)|^2 |f(z)|^{p-2} (1 - |z|^2)^{\frac{p(\alpha + 2)}{2}} \int_0^1 \frac{s^{\alpha + 1}}{1 - (1 - |z|^2)s} \: ds \, dA(z), \\
								G_p(f) := \frac{p}{2(\alpha + 2)} \int_\D |f'(z)|^2 |f(z)|^{p-2} (1 - |z|^2)^{\frac{p(\alpha + 2)}{2}} \: dA(z), \nonumber
							\end{array}
						\]
						the following quantities
						\[
							\begin{array}{c}
								\sup \left \{ \| f \|_{A^p_{\frac{p}{2}(\alpha + 2) - 2}} \: : \: \| f \|_{A^2_\alpha} \leq 1 \right \}, \\
									\displaystyle{\sup \left \{ F_p^\frac{1}{p} (f) \: : \: \| f \|_{A^2_\alpha} \leq 1  \right \}}, \\
									\displaystyle{\sup \left \{ G_p^\frac{1}{p} (f) \: : \: \| f \|_{A^2_\alpha} \leq 1, f^{(j)}(0) = 0, 0 \leq j \leq n\right \}},
							\end{array}
						\]
						are equal for any $n \geq 0$.
					
					\item The inclusion $A^2_\alpha \subset A^p_{\frac{p}{2}(\alpha + 2) - 2}$ is contractive if and only if for every $f$ such that $\| f \|_{A^2_\alpha} = 1$ there exists $g$ satisfying that $\| g \|_{A^2_\alpha} = 1$, $F_p (f) \leq F_p (g)$ and
						\[
							|g(0)| = \sup \left \{ |g(z)| (1 - |z|^2)^\frac{\alpha + 2}{2} \: : \: z \in \D \right \}.
						\]
					
					\item If $2(k - 1) < p \leq 2k$ for some integer $k \geq 2$, then
						\[
							\| f \|_{A^p_{\frac{p}{2}(\alpha + 2) - 2}} \leq \left( \dfrac{p}{2} \right)^{\frac{1}{p}} \dfrac{1}{(k-1)^{\frac{k}{p}-\frac{1}{2}}k^{\frac{1}{2}-\frac{k-1}{p}}} \| f \|_{A^2_\alpha}.
						\]
				\end{itemize}
			
	\section{Vanishing functions and reformulation of the problem}
		
		First, we are going to show that the norm of the inclusion is preserved if we restrict our attention to the subspace of $A^2_\alpha$ functions whose Taylor coefficients are zero on a prescribed finite set.
		
		\begin{thm} \label{Vanishing}
			Let $\Lambda \subset \N \cup \{ 0 \}$ a finite set. Then, for every $\alpha \geq -1$ and $p > 2$, the following identity holds
				\[
					\sup_{\| f \|_{A^2_\alpha} \leq 1} \left \{ \| f \|_{A^p_{\frac{p}{2}(\alpha + 2) - 2}} \right \} = \sup \left \{ \| f \|_{A^p_{\frac{p}{2}(\alpha + 2) - 2}} \: : \: \| f \|_{A^2_\alpha} \leq 1, f^{(j)}(0) = 0 \mbox{ for all } j \in \Lambda \right \}.
				\]
		\end{thm}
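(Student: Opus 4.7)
Denote $\alpha' := \frac{p}{2}(\alpha+2) - 2$ and observe the key identity $\gamma := \frac{\alpha+2}{2} = \frac{\alpha'+2}{p}$. The plan is to produce, for any $f$ with $\|f\|_{A^2_\alpha} = 1$, a sequence of unit-norm functions whose Taylor coefficients indexed by $\Lambda$ vanish and whose $A^p_{\alpha'}$-norms approach $\|f\|_{A^p_{\alpha'}}$. The construction combines a Möbius-type change of variable that simultaneously preserves both norms with a projection away from the finite-dimensional subspace spanned by $\{z^j : j \in \Lambda\}$.

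For $w \in \D$ set $\phi_w(z) := (w - z)/(1 - \bar w z)$ and $T_w f(z) := f(\phi_w(z))(\phi_w'(z))^{\gamma}$; the branch of the power is irrelevant since only $|T_w f|$ enters the computations. Using $|\phi_w'(z)| = (1-|w|^2)/|1-\bar w z|^2$ and $1 - |\phi_w(z)|^2 = (1-|w|^2)(1-|z|^2)/|1-\bar w z|^2$, the change of variables $\zeta = \phi_w(z)$ gives $\|T_w f\|_{A^2_\alpha} = \|f\|_{A^2_\alpha}$ and $\|T_w f\|_{A^p_{\alpha'}} = \|f\|_{A^p_{\alpha'}}$; the exponent $\gamma$ is tailored so that the weights cancel in both integrals. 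For $\alpha = -1$ the $A^2_\alpha$-statement reduces to the classical $H^2$-isometry with the same exponent $1/2$.

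Next I check that $(T_w f)^{(j)}(0) \to 0$ as $|w| \to 1^-$ for each fixed $j \geq 0$. From
$$|T_w f(\zeta)| = \frac{|f(\phi_w(\zeta))|(1 - |\phi_w(\zeta)|^2)^{\gamma}}{(1-|\zeta|^2)^{\gamma}},$$
combined with $\phi_w \to \omega$ locally uniformly on $\D$ as $w \to \omega \in \T$ (since $\phi_\omega$ is the constant map $\omega$) and the uniform little-oh boundary estimate for $|f(z)|(1-|z|^2)^{\gamma}$ stated right after \eqref{AcotacionApalpha}, one obtains $T_w f \to 0$ uniformly on compact subsets of $\D$; Cauchy's formula then yields the required decay at $0$. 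Letting $P_\Lambda$ denote the orthogonal projection of $A^2_\alpha$ onto the finite-dimensional subspace $V_\Lambda := \mathrm{span}\{z^j : j \in \Lambda\}$, this gives $\|P_\Lambda T_w f\|_{A^2_\alpha} \to 0$, and by equivalence of norms on $V_\Lambda$ also $\|P_\Lambda T_w f\|_{A^p_{\alpha'}} \to 0$. Setting $h_w := (T_w f - P_\Lambda T_w f)/\|T_w f - P_\Lambda T_w f\|_{A^2_\alpha}$, orthogonality in $A^2_\alpha$ and the triangle inequality in $A^p_{\alpha'}$ yield $\|h_w\|_{A^2_\alpha} = 1$, $h_w^{(j)}(0) = 0$ for every $j \in \Lambda$, and $\liminf_{|w| \to 1^-}\|h_w\|_{A^p_{\alpha'}} \geq \|f\|_{A^p_{\alpha'}}$. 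Taking suprema over $f$ produces the nontrivial inequality of the theorem, and the reverse inequality is immediate.

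The main step is the Möbius invariance; it relies entirely on the algebraic coincidence $\gamma = (\alpha+2)/2 = (\alpha'+2)/p$ that ties the two Bergman scales in Conjectures \ref{ConjectureBOSZ}--\ref{ConjLiebSolovej}, so a single family $T_w$ acts as an isometry of both spaces at once. Everything else is soft: boundary decay of point evaluations, projection onto a finite-dimensional subspace, and renormalization.
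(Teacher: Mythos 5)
Your proposal is correct and follows essentially the same route as the paper: the same weighted composition isometries $T_w$ acting simultaneously on $A^2_\alpha$ and $A^p_{\frac{p}{2}(\alpha+2)-2}$, the same use of the uniform boundary decay of $|f(z)|(1-|z|^2)^{\frac{\alpha+2}{2}}$ together with Cauchy's formula to kill the Taylor coefficients indexed by $\Lambda$ in the limit $|w|\to 1^-$, and the same projection/triangle-inequality step to pass to the restricted supremum. The only differences are cosmetic (letting $w$ approach the whole boundary rather than a radial sequence, and writing the final estimate through an explicitly normalized $h_w$ instead of the direct triangle-inequality bound).
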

				
		\begin{proof}
			Let $M_\Lambda$ be the restricted supremum on the right, and take $f \in A^2_\alpha$ such that $\| f \|_{A^2_\alpha} \leq 1$. Consider $\{ r_n \}_{n \geq 1} \subset (0, 1)$ any sequence convergent to $1$.
					
			We recall that the operators
				\begin{equation} \label{IsometriaA2alpha}
					T_a (f) (z) := \big( \varphi_a' (z) \big)^\frac{\alpha + 2}{2} f \big ( \varphi_a (z) \big), \quad a \in \D,
				\end{equation}
				are isometries in both $A^2_\alpha$ and $A^p_{\frac{p}{2}(\alpha + 2) - 2}$ and  then, using triangle inequality,
				\[
					\| f \|_{A^p_{\frac{p}{2}(\alpha + 2) - 2}} = \| f_n \|_{A^p_{\frac{p}{2}(\alpha + 2) - 2}} \leq \sum_{j \in \Lambda} \frac{|f_n^{(j)}(0)|}{j!} \| z^j \|_{A^p_{\frac{p}{2}(\alpha + 2)-2}} + M_\Lambda.
				\]
					
			Since $\Lambda$ is finite, it is enough to prove that $f_n^{(j)}(0) \rightarrow 0$ when $n \rightarrow \infty$ in order to see that $M_\Lambda$ is an admissible bound for the unrestricted supremum.
					
			Using Cauchy's integral formula, we have
				\begin{eqnarray*}
					\dfrac{|f_n^{(j)} (0)|}{j!} & = & \left | \dfrac{1}{2\pi i} \int_{|z| = \frac{1}{2}} \dfrac{f_n(z)}{z^{j+1}} dz \right | \\
						& \leq & \dfrac{2^{j-1}}{\pi} \left( \dfrac{4}{3} \right)^{\frac{\alpha + 2}{2}} \int_0^{2\pi} \big ( 1 - \big | \varphi_{r_n} (2^{-1}e^{it}) \big|^2 \big)^\frac{\alpha + 2}{2} \big | f \big ( \varphi_{r_n} (2^{-1}e^{it}) \big ) \big| \: dt.
				\end{eqnarray*}
						
			The function $f$ belongs to $A^2_\alpha$, and consequently we know that
				\[
					\lim_{|z| \rightarrow 1^-} \left \{ |f(z)| (1 - |z|^2)^\frac{\alpha + 2}{2} \right \} = 0.
				\]
					
			In addition, it is an elementary computation to see that $\left [\frac{r_n - 2^{-1}}{1 - 2^{-1}r_n}, \frac{r_n + 2^{-1}}{1 + 2^{-1}r_n} \right]$ is a diameter of the circle $\varphi_{r_n} (2^{-1} \T)$. Then, it is immediate that $f_n^{(j)}(0)$ must converge to 0.
		\end{proof}
				
		A frequent choice for $\Lambda$ will be the discrete interval $\{0, \ldots, n\}$, $n \geq 0$. In other words, we can focus on $A^2_\alpha$ functions with a zero of order arbitrary large at the origin. If $\alpha = -1$ (that is, in the $H^2$ setting) this strategy is completely against the usual procedure of cancelling zeros using Riesz's factorization. In other words, we are going to use non-standard techniques in order to deduce an equivalent formulation of the conjecture.
				
		The other key idea of this paper is that we will not use the standard expression for $\| \cdot \|_{A^p_{\frac{p}{2}(\alpha + 2)-2}}$. Instead, we are going to employ an alternative expression for this norm deduced from the \emph{Hardy-Stein identity}, property that for the sake of completeness we collect under these lines.
		
		\begin{lemalpha}[Hardy-Stein identity] \label{Hardy-Stein}
			For any $f \in \Hol$
			\[
				\dfrac{d}{dr} M_p^p(r, f) = \dfrac{p^2}{2r} \int_{r\D} |f'(z)|^2 |f(z)|^{p-2} \: dA(z), \quad \forall r \in (0,1).
			\]
		\end{lemalpha}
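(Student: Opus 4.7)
The plan is to reduce the identity to the standard divergence-theorem computation applied to $|f|^p$. I would begin by writing the integral $p$-mean as the circular average
\[
M_p^p(r,f)=\frac{1}{2\pi}\int_0^{2\pi}|f(re^{it})|^p\,dt
\]
and differentiating under the integral sign. Green's formula applied to a smooth function $u$ on $\overline{r\D}$ gives the classical identity
\[
\frac{d}{dr}\left(\frac{1}{2\pi}\int_0^{2\pi} u(re^{it})\,dt\right)=\frac{1}{2r}\int_{r\D}\Delta u\,dA
\]
for the normalized area measure $dA=\frac{1}{\pi}\,dx\,dy$, so the whole problem reduces to computing $\Delta|f|^p$.

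The second step is to compute this Laplacian pointwise on $\{f\neq 0\}$. Using $\Delta=4\partial_z\partial_{\bar z}$, $|f|^p=(f\bar f)^{p/2}$ and the Cauchy-Riemann equations ($\partial_{\bar z}f=0$, $\partial_z\bar f=0$), a short differentiation yields $\Delta|f|^p=p^2|f'|^2|f|^{p-2}$ at every point where $f$ does not vanish. Substituting into the identity from the first step produces the claim on $\{f\neq 0\}$, and the case $f\equiv 0$ is trivial.

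The remaining, and only delicate, step is to justify the Green/divergence-theorem computation in the presence of zeros of $f$. Assuming $f\not\equiv 0$, its zeros are isolated, so only finitely many $z_1,\dots,z_N$ lie in $\overline{r\D}$. I would apply the divergence theorem on the pierced domain $r\D\setminus\bigcup_{j=1}^{N}\overline{B(z_j,\varepsilon)}$, and then let $\varepsilon\to 0^+$. At a zero of order $k$, the integrand $|f'|^2|f|^{p-2}$ behaves like $|z-z_j|^{kp-2}$, which is locally integrable because $kp>0$; on the circle $\partial B(z_j,\varepsilon)$ the normal derivative $\partial_n|f|^p$ is of order $\varepsilon^{kp-1}$, and since the circle has length $\sim\varepsilon$, the corresponding boundary contribution is of order $\varepsilon^{kp}$ and vanishes as $\varepsilon\to 0^+$.

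The main obstacle is precisely this last point: making sure that, for general $p>0$, both the pointwise Laplacian formula and the Green-theorem argument extend across the zeros of $f$. The computation of $\Delta|f|^p$ is purely algebraic and the divergence-theorem reduction on a smooth domain is textbook, so the only real work is bookkeeping the $\varepsilon$-limit around the zeros; this is routine but it is the single non-algebraic ingredient of the proof.
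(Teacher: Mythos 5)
The paper does not actually prove this lemma: it only points to the cited monographs (Pavlovi\'c, Section~2.6, and Garnett, p.~174), and your argument is exactly the standard proof given there, namely Green's formula on a punctured disk combined with the pointwise computation $\Delta|f|^p=p^2|f'|^2|f|^{p-2}$ off the zero set, with the correct exponent bookkeeping ($|f'|^2|f|^{p-2}\sim|z-z_j|^{kp-2}$ integrable, boundary terms $O(\varepsilon^{kp})$). The argument is correct; the only detail left implicit is the (at most countable) set of radii $r$ for which $f$ vanishes somewhere on $|z|=r$, where one passes from the identity $\int_{|z|=r}\partial_n|f|^p\,ds=\int_{r\D}\Delta|f|^p\,dx\,dy$ to the stated derivative of $M_p^p(r,f)$ by continuity of both sides (or by integrating in $r$ first), which is routine.
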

			
		\begin{proof}
			The proof can be found in several monographs like \cite[Section~2.6]{MR3154590} or \cite[p.~174]{MR1217706}.
		\end{proof}
		
		Indeed, using integration by parts, the Hardy-Stein identity, Fubini's theorem and an appropriate change of variables, we get that the $A^p_{\frac{p}{2}(\alpha + 2)-2}$ norm of $f \in A^2_\alpha$ can be computed as follows.
				
		\begin{eqnarray*}
			\| f \|_{A^p_{\frac{p(\alpha + 2)}{2} - 2}}^p & = & |f(0)|^p + \int_0^1 (1 - r^2)^{\frac{p(\alpha + 2)}{2} - 1} \dfrac{d}{dr} M_p^p (r, f) \: dr \\
				& = & |f(0)|^p + \dfrac{p^2}{2} \int_0^1 \dfrac{(1 - r^2)^{\frac{p(\alpha + 2)}{2} - 1}}{r} \int_{r \D} |f'(z)|^2 |f(z)|^{p - 2} \: dA(z) \, dr \\
				& = & |f(0)|^p + \dfrac{p^2}{4} \int_\D |f'(z)|^2 |f(z)|^{p-2} (1 - |z|^2)^{\frac{p(\alpha + 2)}{2}} \int_0^1 \dfrac{s^{\frac{p(\alpha + 2)}{2} - 1}}{1 - (1 - |z|^2)s} \: ds \, dA(z).
		\end{eqnarray*}
				
		This expression of $\| \cdot \|_{A^p_{\frac{p}{2}(\alpha + 2) - 2}}$ and Theorem \ref{Vanishing} are enough to prove the next equivalence. In addition, we are going to need the following standard inequality due to Chebyshev.
		
		\begin{lemalpha}[Chebyshev's inequality] \label{Chebyshev}
			Let $a < b$ real numbers and $\mu$ a probability measure in $(a.b)$. Then, if $f,g : (a,b)  \rightarrow \R$ are two functions one increasing and the other decreasing, then
			\[
				\int_a^b f(x) g(x) \: d\mu(x) \leq \int_a^b f(x) \: d\mu(x) \int_a^b g(x) \: d \mu(x).
			\]
		\end{lemalpha}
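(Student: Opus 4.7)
The plan is to use the classical product-space trick: since one of $f,g$ is increasing and the other decreasing, for every pair $(x,y)\in(a,b)^2$ the product $(f(x)-f(y))(g(x)-g(y))$ is nonpositive, because the two factors always have opposite signs (or at least one of them vanishes). This pointwise inequality on $(a,b)^2$ is the whole content of the lemma; everything else is bookkeeping.

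First I would note that both $f$ and $g$ are monotone and hence measurable, and I would assume (by a standard truncation argument, e.g.\ replace $f$ and $g$ by $\max(-N,\min(N,f))$ and similarly for $g$, then let $N\to\infty$ using monotone convergence) that all the integrals appearing are finite, so that Fubini applies and no integrand is ill-defined. This reduction is routine and I would not dwell on it.

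Next I would expand
\[
(f(x)-f(y))(g(x)-g(y)) = f(x)g(x)+f(y)g(y)-f(x)g(y)-f(y)g(x) \le 0
\]
and integrate against the product probability measure $d\mu(x)\otimes d\mu(y)$ over $(a,b)^2$. Using $\int d\mu = 1$ and Fubini, the first two terms each yield $\int fg\,d\mu$, while the cross terms each yield $\int f\,d\mu\cdot\int g\,d\mu$. Collecting everything gives
\[
2\int_a^b f(x)g(x)\,d\mu(x) \le 2\int_a^b f(x)\,d\mu(x)\int_a^b g(x)\,d\mu(x),
\]
which is the claim after dividing by $2$.

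There is no real obstacle here; the only subtle point is the integrability reduction mentioned above, which is why I would start by noting that one can assume $f,g$ are bounded and then pass to the limit. Since the lemma is invoked later only with functions arising from concrete integrands already known to be integrable, I would mention this truncation briefly rather than develop it in detail.
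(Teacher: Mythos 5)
Your proof is correct and is exactly the argument the paper gives: integrating the pointwise inequality $(f(x)-f(y))(g(x)-g(y))\le 0$ over $(a,b)^2$ against the product measure and applying Fubini's theorem. The integrability remarks you add are sensible but not needed in the paper's applications, where the integrands are explicit and bounded.
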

			
		\begin{proof}
			It is a consequence of
			\[
				\int_{(a,b)^2} \big ( f(x) - f(y) \big) \big ( g(x) - g(y) \big) \: d\mu(x, y) \leq  0,
			\]
			and Fubini's theorem.
		\end{proof}
				
		\begin{thm} \label{Equivalence}
			Let $\alpha \geq -1$, $p > 2$ and $C \geq 1$. The following statements are equivalent:
				\begin{enumerate}[(a)]
					\item \label{Eq1} For any $f \in A^2_\alpha$,
						\[
							\| f \|_{A^p_{\frac{p}{2}(\alpha + 2) - 2}} \leq C \| f \|_{A^2_\alpha}.
						\]
							
					\item \label{Eq2} For every $f \in A^2_\alpha$ such that $\| f \|_{A^2_\alpha} = 1$, the estimate
						\[
							|f(0)|^p + \dfrac{p}{2} \int_\D |f'(z)|^2 |f(z)|^{p - 2} (1 - |z|^2)^{\frac{p(\alpha + 2)}{2}} \int_0^1 \dfrac{s^{\alpha + 1}}{1 - (1 - |z|^2) s} \: ds \, dA(z) \leq C^p,
						\]
						holds.
								
					\item \label{Eq3} There exists $M > 0$ such that for every $f \in A^2_\alpha$ with $\| f \|_{A^2_\alpha} = 1$ and $f^{(j)} (0) = 0$, $0 \leq j \leq M$, the estimate
						\[
							\dfrac{p}{2(\alpha + 2)} \int_\D |f'(z)|^2 |f(z)|^{p - 2} (1 - |z|^2)^{\frac{p(\alpha + 2)}{2}} \: dA(z) \leq C^p,
						\]
						holds.
				\end{enumerate}
		\end{thm}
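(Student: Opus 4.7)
I would establish the three statements equivalent in the cyclic order $(a) \Rightarrow (b) \Rightarrow (c) \Rightarrow (a)$. For $(a) \Rightarrow (b)$, using the Hardy--Stein expression for $\|f\|^p_{A^p_{\frac{p(\alpha+2)}{2}-2}}$ derived in the preamble, the implication reduces to the pointwise estimate
\[
\frac{p^2}{4}\int_0^1 \frac{s^{\frac{p(\alpha+2)}{2}-1}}{1-s(1-|z|^2)}\, ds \;\geq\; \frac{p}{2} \int_0^1 \frac{s^{\alpha+1}}{1-s(1-|z|^2)}\, ds, \qquad z \in \D.
\]
This follows from Lemma \ref{Chebyshev} applied to the probability measure $d\mu(s) = (\alpha+2)s^{\alpha+1}\,ds$ on $(0,1)$ with the two increasing functions $\varphi(s) = (1-s(1-|z|^2))^{-1}$ and $\psi(s) = \frac{p}{2}s^{(\alpha+2)(p-2)/2}$: a short calculation gives $\int_0^1 \psi\, d\mu = 1$, so Chebyshev's inequality reduces exactly to the displayed estimate. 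For $(b) \Rightarrow (c)$, the elementary bound $1 - s(1-|z|^2) \leq 1$ yields $\int_0^1 \frac{s^{\alpha+1}}{1-s(1-|z|^2)}\,ds \geq \frac{1}{\alpha+2}$, so $F_p(f) \geq G_p(f)$ whenever $f(0)=0$, and $(c)$ follows with the same constant $C$ for every $M \geq 0$.

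For the main implication $(c) \Rightarrow (a)$, by Theorem \ref{Vanishing} it suffices to bound $\|f\|^p$ for $f$ with $\|f\|_{A^2_\alpha}=1$ whose first $N+1$ Taylor coefficients vanish, for any $N$. Taking $N \geq M$ (the $M$ from $(c)$), the hypothesis gives $G_p(f) \leq C^p$ for every such $f$. I would then prove the uniform estimate
\[
\sup\bigl\{\|f\|^p - G_p(f) \,:\, \|f\|_{A^2_\alpha} = 1,\; f^{(j)}(0)=0 \text{ for } 0 \leq j \leq N\bigr\} \;\xrightarrow{N\to\infty}\; 0,
\]
by writing
\[
\|f\|^p - G_p(f) = \frac{p^2}{4}\int_\D |f'(z)|^2|f(z)|^{p-2}(1-|z|^2)^{\frac{p(\alpha+2)}{2}} K(z)\, dA(z),
\]
with $K(z) := \sum_{n\geq 1}\frac{(1-|z|^2)^n}{\frac{p(\alpha+2)}{2}+n}$, and splitting the integral at a radius $\rho < 1$. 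On $\{|z| > \rho\}$, a geometric-series estimate gives $K(z) \leq c_0(1-\rho^2)$ with $c_0$ depending only on $p$ and $\alpha$, while the total mass of the remaining factor is $\frac{2(\alpha+2)}{p}G_p(f) \leq \frac{2(\alpha+2)}{p}C^p$. On $\{|z| \leq \rho\}$, Cauchy--Schwarz on the Taylor tails gives $|f(z)|^2 \leq \sum_{k > N} c_{\alpha+2}(k)|z|^{2k}$ and the analogous bound for $|f'|$, so both $|f|$ and $|f'|$ tend to $0$ uniformly on this disk as $N \to \infty$; combined with the local integrability of $K$ near the origin, this makes the inner contribution vanish. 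Choosing $\rho$ close to $1$ first and then $N$ large yields $\|f\|^p \leq C^p + o(1)$, and the reduction via Theorem \ref{Vanishing} delivers $(a)$.

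The main obstacle is the uniformity in this last step. Since $K(z)$ has a logarithmic singularity at the origin, the standard point-evaluation bound in $A^2_\alpha$ is not sharp enough by itself. One must invoke the sharper decay coming from the vanishing of the first $N+1$ Taylor coefficients through the Cauchy--Schwarz tail estimate, and carefully balance the rate at which this tail decays in $N$ against the rate at which $K$ grows near $0$, in order to obtain an $N$-uniform bound independent of the specific $f$.
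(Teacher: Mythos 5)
Your proposal is correct, and for the substantive implication \ref{Eq3}~$\Rightarrow$~\ref{Eq1} it follows a genuinely different route from the paper. For \ref{Eq1}~$\Rightarrow$~\ref{Eq2} both arguments reduce to the same pointwise inequality, namely the monotonicity in $p$ of $H(p,z):=\frac{p}{2}\int_0^1 s^{\frac{p(\alpha+2)}{2}-1}\big(1-(1-|z|^2)s\big)^{-1}\,ds$; the paper obtains it from an integration-by-parts identity, while you get it from Chebyshev's inequality. One small caveat there: Lemma \ref{Chebyshev} as stated covers the opposite-monotonicity case, so you need the same-monotonicity variant with the reversed inequality (the paper itself invokes that variant later, and it follows from the same two-variable identity). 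Your treatment of \ref{Eq2}~$\Rightarrow$~\ref{Eq3} coincides with the paper's. For \ref{Eq3}~$\Rightarrow$~\ref{Eq1}, the paper linearizes $H(p,z)$ at $p=2$ using its concavity in $p$, removes the factor $|f(z)|^{p-2}(1-|z|^2)^{\frac{(p-2)(\alpha+2)}{2}}$ via the pointwise bound \eqref{AcotacionApalpha}, and then evaluates the resulting error term exactly by Parseval's identity, showing the coefficient weights decay like $1/k$ through a Beta-function estimate. You instead isolate the excess kernel $K(z)=\sum_{n\geq 1}(1-|z|^2)^n/\big(\tfrac{p(\alpha+2)}{2}+n\big)$ directly and split the disk: near the boundary $K(z)=O(1-\rho^2)$ while the remaining mass is controlled by $G_p(f)\leq C^p$, and on $\{|z|\leq\rho\}$ the vanishing of the first $N+1$ Taylor coefficients forces $f$ and $f'$ to zero uniformly by a Cauchy--Schwarz tail bound, against the $N$-independent finite mass $\int_{|z|\leq\rho}K\,dA$ (the logarithmic singularity at the origin being area-integrable). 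This is more elementary --- no concavity, no Parseval, no special functions --- at the cost of not producing the explicit quantitative rate that the paper's computation yields (and which is not needed for the theorem). Finally, the ``delicate balancing'' you worry about in your closing paragraph is not actually required: once $\rho$ is fixed, $\int_{|z|\leq\rho}K\,dA$ is a fixed constant and the inner contribution is that constant times a quantity tending to zero uniformly in $f$ as $N\to\infty$, so choosing $\rho$ first and then $N$ works exactly as you describe.
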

				
		\begin{proof}
			If $z \in \D \setminus \{ 0 \}$, integration by parts yields that
				\begin{eqnarray}
					H(p, z) & := & \dfrac{p}{2} \int_0^1 \dfrac{s^{\frac{p(\alpha + 2)}{2} - 1}}{1 - (1 - |z|^2)s} \: ds \nonumber \\
						& = & \dfrac{1}{\alpha + 2} \left ( \dfrac{1}{|z|^2} - (1 - |z|^2) \int_0^1 \dfrac{s^{\frac{p(\alpha + 2)}{2}}}{\big ( 1 - (1 - |z|^2) s \big)^2} ds \right). \label{AuxFunct1}
				\end{eqnarray}
							
			In particular, $H(p, z)$ is an increasing function of $p$. Then,
				\[
					|f(0)|^p + \dfrac{p}{2} \int_\D |f'(z)|^2 |f(z)|^{p - 2} (1 - |z|^2)^{\frac{p(\alpha + 2)}{2}} \int_0^1 \dfrac{s^{\alpha + 1}}{1 - (1 - |z|^2) s} \: ds \, dA(z) \leq \| f \|_{A^p_{\frac{p}{2}(\alpha + 2) - 2}}^p,
				\]
				for every $f \in A^2_\alpha$. Therefore, \ref{Eq1} implies \ref{Eq2}.
						
			That \ref{Eq2} implies \ref{Eq3} is trivial.
				
			Finally, assume that \ref{Eq3} is true. Due to Theorem \ref{Vanishing}, it is enough to prove that for any $\varepsilon > 0$ there exists a sufficiently large $m$ such that
				\[
					\| f \|_{A^p_{\frac{p}{2}(\alpha + 2) - 2}}^p \leq C^p + \varepsilon,
				\]
				if $\| f \|_{A^2_\alpha} = 1$ and $f^{(j)}(0) = 0$ for every $0 \leq j \leq m$.
					
			Observe that \eqref{AuxFunct1} yields that $H(p, z)$ is a concave function of $p$, and therefore its graph lies below the tangent line at any point. In particular,
				\[
					H(p, z) \leq H(2, z) + (p - 2) \dfrac{\partial H}{\partial p} (2, z), \quad \forall z \in \D \setminus \{ 0 \},
				\]
				for any $p > 2$. We are going to estimate the integrals
				\begin{eqnarray*}
					I_1 & := & \int_\D |f'(z)|^2 |f(z)|^{p-2} (1 - |z|^2)^\frac{p(\alpha + 2)}{2} \left(H(2,z) - \dfrac{1}{\alpha + 2} + (p - 2) \dfrac{\partial H}{\partial p} (2, z)\right) \: dA(z), \\
					I_2 & := & \int_\D |f'(z)|^2 |f(z)|^{p-2} (1 - |z|^2)^\frac{p(\alpha + 2)}{2} \: dA(z).
				\end{eqnarray*}
						
			Using the geometric series and monotone convergence theorem, we obtain
				\[
					H(p, z) = \dfrac{p}{2} \int_0^1 \sum_{n = 0}^\infty s^{\frac{p(\alpha + 2)}{2}+n-1} (1 - |z|^2)^n \: ds = \dfrac{p}{2} \sum_{n = 0}^\infty \dfrac{(1 - |z|^2)^n}{n+\frac{p(\alpha + 2)}{2}},
				\]
				and, similarly,
				\begin{equation} \label{AuxFunct1Der}
					\dfrac{\partial H}{\partial p} (2, z) = \dfrac{1}{2} \sum_{n = 1}^\infty \dfrac{n (1 - |z|^2)^n}{(n + \alpha + 2)^2}.
				\end{equation}
					
			Using \eqref{AcotacionApalpha}, \eqref{AuxFunct1Der}, Parseval's identity and monotone convergence theorem, we have
				\begin{eqnarray*}
					I_1 & \leq & \int_\D |f'(z)|^2 (1 - |z|^2)^{\alpha + 2} \left(H(2,z) - \dfrac{1}{\alpha + 2} + (p - 2) \dfrac{\partial H}{\partial p} (2, z)\right) \: dA(z) \\
						& = & \sum_{k = m + 1}^\infty |a_k|^2 k^2 \sum_{n = 1}^\infty \dfrac{\frac{p}{2}n + \alpha + 2}{(n+\alpha + 2)^2} B(k, n + \alpha + 3),
				\end{eqnarray*}
				where $\{a_k\}_{k \geq m+1}$ is the sequence of Taylor coefficients of $f$. Then, since $\| f \|_{A^2_\alpha} = 1$,
				\[
					I_1 \leq \dfrac{1}{\Gamma (\alpha + 2)}\sup_{k \geq m + 1} \left \{ \dfrac{\Gamma (k + \alpha + 2) k}{(k - 1)!} \sum_{n = 1}^\infty \dfrac{\frac{p}{2}n + \alpha + 2}{(n + \alpha + 2)^2}  B(k, n + \alpha + 3) \right \}.
				\]
					
			Observe that the last bound for $I_1$ is uniform in $f$. We are going to check that
				\[
					\lim_{k \rightarrow \infty} \dfrac{\Gamma (k + \alpha + 2) k}{(k-1)!} \sum_{n = 1}^\infty \dfrac{\frac{p}{2}n + \alpha + 2}{(n + \alpha + 2)^2}  B(k, n + \alpha + 3) = 0.
				\]
					
			Consider the probability measure $d\sigma (x) := (\alpha + 4) (1-x)^{\alpha + 3} \: dx$ in $(0,1)$. Then, Chebyshev's inequality yields that
				\begin{eqnarray*}
					B(k, n + \alpha + 3) & = & \dfrac{1}{\alpha + 4} \int_0^1 x^{k - 1} (1-x)^{n-1} \: d \sigma (x) \\
						& \leq & \dfrac{1}{\alpha + 4} \int_0^1 x^{k - 1} \: d \sigma (x) \int_0^1 (1 - x)^{n-1} \: d \sigma (x) \\
						& = & \dfrac{\Gamma(\alpha + 5) (k-1)!}{(n + \alpha + 3)\Gamma(k + \alpha + 4)}, \quad \forall n \geq 1.
				\end{eqnarray*}
					
			Thus,
				\[
					\dfrac{\Gamma (k + \alpha + 2) k}{(k-1)!} \sum_{n = 1}^\infty \dfrac{\frac{p}{2}n + \alpha + 2}{(n + \alpha + 2)^2}  B(k, n + \alpha + 3) \lesssim \dfrac{k}{(k+\alpha+3)(k+\alpha+2)}.
				\]
					
			In other words, we have proved that for every $\varepsilon > 0$ there exists $m$ large enough such that $I_1 \leq \frac{2}{p} \varepsilon$ if $f(0) = \ldots = f^{(m)}(0) = 0$.
			
Of course, without loss of generality we can assume that $m > M$ and, since we are assuming that \ref{Eq3} is true, then $\frac{p}{2(\alpha + 2)} I_2 \leq C^p$.
						
			Summing up, we have shown that
				\[
					\| f \|_{A^p_{\frac{p}{2}(\alpha + 2) - 2}}^p \leq \dfrac{p}{2(\alpha + 2)} I_2 + \dfrac{p}{2} I_1 \leq C^p + \varepsilon,
				\]
				if $\| f \|_{A^2_\alpha} = 1$ and its first $m$ Taylor coefficients are zero. Therefore, the inclusion operator from $A^2_\alpha$ to $A^p_{\frac{p}{2}(\alpha + 2)-2}$ has norm less than or equal to $C$ if \ref{Eq3} is true.
		\end{proof}
				
		As a sample application of Theorem \ref{Equivalence} to a possible solution to the conjecture, we are going to give an alternative proof for the case $p = 2k$, which has already been solved by several authors like O.F. Brevig, J. Ortega-Cerdà, K. Seip and J. Zhao \cite{MR3858278}, E.H. Lieb and J.P. Solovej \cite{Lieb_Solovej_2021} or D. Békollè, J. Gonessa and B.F. Sehba for $\alpha = 0$ \cite{bekolle2020conjecture}.
				
		\begin{thmalpha}[Extended Carleman's inequality]\label{ExtendedCarleman}
			If $\alpha \geq -1$ and $k$ is a positive integer, then
				\[
					\| f \|_{A^{2k}_{k(\alpha+2)-2}} \leq \| f \|_{A^2_\alpha}.
				\]
		\end{thmalpha}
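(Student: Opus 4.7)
The plan is to invoke Theorem \ref{Equivalence} with $p=2k$ and $C=1$, and verify the simplest of the three equivalent conditions, namely \ref{Eq3}. So I would fix $f\in A^2_\alpha$ with $\|f\|_{A^2_\alpha}=1$ and $f^{(j)}(0)=0$ for $0\leq j\leq M$ (the vanishing assumption is granted by Theorem \ref{Equivalence}, though it will not actually be used). Because $p-2=2(k-1)$ is an even integer, the first step is to rewrite the integrand in purely holomorphic form: the chain rule $(f^k)'=k\,f^{k-1}f'$ yields $|f'(z)|^2|f(z)|^{2k-2}=k^{-2}\bigl|(f^k)'(z)\bigr|^2$, so condition \ref{Eq3} reduces to
\[
  \frac{1}{k(\alpha+2)}\int_{\D}\bigl|(f^k)'(z)\bigr|^2(1-|z|^2)^{k(\alpha+2)}\,dA(z)\leq 1.
\]

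Next, I would expand $f^k=\sum_{n\geq 0}b_nz^n$ and apply Parseval's identity to convert the left-hand side into a weighted sum of $|b_n|^2$. Using $B(n,k(\alpha+2)+1)=(n-1)!\,\Gamma(k(\alpha+2)+1)/\Gamma(n+k(\alpha+2)+1)$, the weight simplifies to
\[
  \frac{n^2\,B(n,k(\alpha+2)+1)}{k(\alpha+2)}=\frac{n}{n+k(\alpha+2)}\cdot\frac{1}{c_{k(\alpha+2)}(n)}.
\]
Since the extra factor $n/(n+k(\alpha+2))$ is strictly smaller than $1$ for every $n\geq 1$, it suffices to establish the coefficient inequality $\sum_{n\geq 0}|b_n|^2/c_{k(\alpha+2)}(n)\leq 1$.

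The final ingredient is a Cauchy--Schwarz bound, driven by the Vandermonde-type convolution identity
\[
  \sum_{n_1+\cdots+n_k=n}c_{\alpha+2}(n_1)\cdots c_{\alpha+2}(n_k)=c_{k(\alpha+2)}(n),
\]
obtained by equating Taylor coefficients in $(1-z)^{-k(\alpha+2)}=\bigl((1-z)^{-(\alpha+2)}\bigr)^k$. Normalizing via $\tilde a_n:=a_n/\sqrt{c_{\alpha+2}(n)}$ so that $\sum_n|\tilde a_n|^2=\|f\|_{A^2_\alpha}^2=1$, Cauchy--Schwarz applied to the Cauchy product $b_n=\sum_{n_1+\cdots+n_k=n}a_{n_1}\cdots a_{n_k}$ gives
\[
  |b_n|^2\leq c_{k(\alpha+2)}(n)\sum_{n_1+\cdots+n_k=n}|\tilde a_{n_1}|^2\cdots|\tilde a_{n_k}|^2,
\]
and summing over $n$ collapses the right-hand side into $\bigl(\sum_n|\tilde a_n|^2\bigr)^k=1$. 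The hard part---really the only step that is not bookkeeping---will be recognizing the Vandermonde identity as the exact combinatorial tool that pairs Cauchy--Schwarz with the reproducing-kernel weights $c_{\alpha+2}(n)$; once it is in place, the equality case forces $\tilde a_n$ to be geometric and recovers the expected extremal $f(z)\propto(1-\overline{\zeta} z)^{-(\alpha+2)}$.
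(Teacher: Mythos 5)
Your proof is correct and rests on exactly the same ingredients as the paper's: verifying condition \ref{Eq3} of Theorem \ref{Equivalence} with $C=1$, Parseval applied to $(f^k)'$, the Beta-function identity \eqref{BetaBinom}, and a Cauchy--Schwarz estimate weighted by the Vandermonde convolution of the coefficients $c_{\alpha+2}$. The only difference is organizational: the paper establishes the coefficient inequality $\|f^k\|_{A^2_{k(\alpha+2)-2}}\leq \|f\|_{A^2_\alpha}^{k}$ by induction on $k$, applying a two-term Cauchy--Schwarz to the Cauchy product of $f$ with $f^{k-1}$ at each step, whereas you obtain it in one shot with a $k$-fold Cauchy--Schwarz on $b_n=\sum_{n_1+\cdots+n_k=n}a_{n_1}\cdots a_{n_k}$; both versions are sound.
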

				
		\begin{proof}
			We will provide a proof by induction. If $k = 1$ the result is obvious (the two spaces are the same). 
					
			Assume that the inclusion $A^2_\alpha \subset A^{2k}_{k(\alpha + 2) - 2}$ is contractive for some $k$, and take $f$ with $\| f \|_{A^2_\alpha} = 1$. We are going to show that
				\[
					I_{k + 1} := \dfrac{k + 1}{\alpha + 2} \int_\D |f'(z)|^2 |f(z)|^{2k} (1-|z|^2)^{(k+ 1)(\alpha + 2)} \: dA(z),
				\]
				can be bounded by $1$.
					
			Let $g = f^{k + 1}$. If $\{a_n\}_{n \geq 0}$, $\{b_n\}_{n \geq 0}$ and $\{c_n\}_{n \geq 0}$ are the sequences of Taylor coefficients of $f$, $f^k$ and $f^{k + 1}$, respectively, then $c_n = a_0 b_n + \ldots + a_n b_0$.
						
			Therefore, using the chain rule and Parseval's identity, we have
				\begin{eqnarray*}
					I_{k + 1} & = & \dfrac{1}{(k+1)(\alpha + 2)} \int_\D |g'(z)|^2 (1 - |z|^2)^{(k+1)(\alpha + 2)} \: dA(z) \\
						& = & \dfrac{1}{(k+1)(\alpha + 2)} \sum_{n = 1}^\infty n^2 B\big (n, (k+1)( \alpha + 2) + 1 \big) |c_n|^2.
				\end{eqnarray*}
					
			On the other hand, due to Cauchy-Schwarz inequality,
				\[
					|c_n|^2 \leq \left ( \sum_{j = 0}^n c_{\alpha + 2} (j) c_{k(\alpha + 2)} (n - j) \right) \left( \sum_{j= 0}^n \dfrac{|a_j|^2}{c_{\alpha + 2} (j)} \dfrac{|b_{n-j}|^2}{c_{k(\alpha + 2)}(n-j)}\right).
				\]
						
			Using the fact that $\{c_{\beta} (n) \}_{n \geq 0}$ is the sequence of Taylor coefficients of $(1 - z)^{-\beta}$, it is clear that
				\[
					\sum_{j = 0}^n c_{\alpha + 2} (j) c_{k(\alpha + 2)} (n - j) = c_{(k + 1)(\alpha + 2)} (n), \quad \forall n \geq 0,
				\]
				and therefore
				\begin{equation}\label{BetaBinom}
					\dfrac{n^2 B \big ( n, (k + 1)(\alpha + 2) + 1\big)}{(k+1)(\alpha + 2)} c_{(k+1)(\alpha + 2)}(n) = \dfrac{n}{n + (k+1)(\alpha + 2)}, \quad \forall n \geq 1.
				\end{equation}
						
			Summarizing,
				\begin{eqnarray*}
					I_{k + 1} & \leq & \sum_{n = 1}^\infty \dfrac{n}{n + (k+1)(\alpha + 2)}\sum_{j = 0}^n \dfrac{|a_j|^2}{c_{\alpha + 2}(j)} \dfrac{|b_{n-j}|^2}{c_{k(\alpha + 2)} (n - j)} \\
						& \leq & \| f \|_{A^2_\alpha}^2 \| f^k \|_{A^2_{k(\alpha + 2) - 2}}^2 \\
						& = & \| f \|_{A^{2k}_{k(\alpha + 2) - 2}}^{2k},
				\end{eqnarray*}
				and then the inclusion of $A^2_\alpha$ in $A^{2(k+1)}_{(k + 1)(\alpha + 2)-2}$ is contractive due to the induction hypothesis.
		\end{proof}				
				
		Finally, we are going to derive a new necessary and sufficient condition for the contractivity.
				
		\begin{lem} \label{AuxLemmaEquiv}
			Let $\alpha \geq -1$ and $p > 2$. If $f$ is an analytic function such that $\| f \|_{A^2_\alpha} = 1$ and $|f(0)|~=~\displaystyle{\sup_{z \in \D} \left \{ |f(z)|(1 - |z|^2)^{\frac{\alpha + 2}{2}} \right \}}$, then $F_p(f) \leq 1$. Furthermore, equality is attained if and only if $f$ is constant.
		\end{lem}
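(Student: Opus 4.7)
The plan is to use the pointwise hypothesis to extract the factor $|f(z)|^{p-2}(1-|z|^2)^{(p-2)(\alpha+2)/2}$ from the integrand of $F_p(f)$, then to recognise the remaining integral as the $p=2$ instance of the Hardy--Stein representation that is computed in the paragraph preceding Theorem~\ref{Equivalence}, and finally to reduce the desired bound to a one-variable inequality in $t:=|f(0)|^2$.

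Concretely, since $p-2>0$, raising the hypothesis $|f(z)|(1-|z|^2)^{(\alpha+2)/2}\le|f(0)|$ to the power $p-2$ yields
\[
|f(z)|^{p-2}(1-|z|^2)^{(p-2)(\alpha+2)/2}\le|f(0)|^{p-2},\qquad z\in\D.
\]
Splitting the weight in $F_p(f)$ as $(1-|z|^2)^{p(\alpha+2)/2}=(1-|z|^2)^{(p-2)(\alpha+2)/2}(1-|z|^2)^{\alpha+2}$ and inserting this bound gives
\[
F_p(f)\le|f(0)|^p+\frac{p}{2}|f(0)|^{p-2}\int_\D|f'(z)|^2(1-|z|^2)^{\alpha+2}\int_0^1\frac{s^{\alpha+1}}{1-(1-|z|^2)s}\,ds\,dA(z).
\]
Specialising the Hardy--Stein computation that precedes Theorem~\ref{Equivalence} to $p=2$, the last double integral equals $\|f\|^2_{A^2_\alpha}-|f(0)|^2=1-|f(0)|^2$. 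Setting $t:=|f(0)|^2\in[0,1]$, it therefore suffices to establish the scalar inequality
\[
t^{p/2}+\frac{p}{2}t^{(p-2)/2}(1-t)\le 1,
\]
with equality only at $t=1$.

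For this I would factor the left-hand side as $t^{(p-2)/2}\bigl[1+\tfrac{p-2}{2}(1-t)\bigr]$ and, after the substitution $u=1/t\ge 1$ and clearing denominators, recognise the claim as
\[
u^{p/2}\ge\frac{p}{2}u-\frac{p-2}{2},
\]
which is precisely the tangent-line estimate at $u=1$ for the convex function $u\mapsto u^{p/2}$ (equivalently, Bernoulli's inequality with exponent $p/2>1$); strict convexity forces equality exactly when $u=1$. Tracing the chain backwards, equality in $F_p(f)\le 1$ forces $|f(0)|^2=\|f\|^2_{A^2_\alpha}=1$, and the Taylor-coefficient formula for $\|\cdot\|_{A^2_\alpha}$ (using $c_{\alpha+2}(0)=1$) then forces $a_n=0$ for every $n\ge 1$, so $f$ is a unimodular constant; the converse is immediate, since $f'\equiv 0$ makes $F_p(f)=|f(0)|^p=1$. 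I do not foresee any real obstacle: the only slightly delicate step is the one-variable inequality, and it is disposed of cleanly by convexity.
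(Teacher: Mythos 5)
Your proposal is correct and follows essentially the same route as the paper: the pointwise hypothesis extracts the factor $|f(0)|^{p-2}$, the Hardy--Stein representation at $p=2$ identifies the remaining integral with $1-|f(0)|^2$, and the bound reduces to the scalar inequality for $x^{p-2}\left(x^2+\frac{p}{2}(1-x^2)\right)$ on $[0,1]$. The only (cosmetic) difference is that you settle the scalar inequality by a tangent-line/convexity argument in $u=1/t$, whereas the paper checks directly that the function is strictly increasing; both give the same equality case $|f(0)|=1=\|f\|_{A^2_\alpha}$, forcing $f$ constant.
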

					
		\begin{proof}
			By the conditions on $f$, we have $F_p (f) \leq |f(0)|^{p - 2} \left( 1 + \frac{p}{2} (1 - |f(0)|^2)\right)$.
					
			If $p > 2$, a direct computation shows that the function $x^{p - 2} \left( x^2 + \frac{p}{2} (1 - x^2) \right)$ is a strictly increasing function of $x$ in $[0, 1]$. In other words, $F_f (p) \leq 1$.
					
			On the other hand, equality is possible only if $|f(0)| = 1 = \| f \|_{A^2_\alpha}$, and thus $f$ must be constant.
		\end{proof}
				
		\begin{thm}
			Let $p > 2$ and $\alpha \geq -1$. The following statements are equivalent:
				\begin{enumerate}[(a)]
					\item The inclusion $A^2_\alpha \subset A^p_{\frac{p}{2}(\alpha + 2) - 2}$ is contractive.
							
					\item For any $f$ such that $\| f \|_{A^2_\alpha} = 1$, there exists $g$ with $\| g \|_{A^2_\alpha} = 1$ and $|g(0)|~=~\displaystyle{\sup_{z \in \D} \left \{ |g(z)| (1 - |z|^2)^{\frac{\alpha + 2}{2}} \right \}}$ such that
						\[
							F_p (f) \leq F_p(g).
						\]
				\end{enumerate}
		\end{thm}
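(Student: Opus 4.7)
My plan is to observe that the theorem is essentially a packaging of Theorem \ref{Equivalence} (with $C=1$) together with Lemma \ref{AuxLemmaEquiv}: Theorem \ref{Equivalence} reduces contractivity to the uniform bound $F_p(f) \leq 1$ for every $f$ with $\|f\|_{A^2_\alpha} = 1$, and Lemma \ref{AuxLemmaEquiv} certifies that bound precisely for those $f$ that achieve their normalized supremum at the origin. So the proof will consist of using these two facts as black boxes, one for each implication.

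For the direction (a)$\Rightarrow$(b), my plan is to exhibit a universal witness: the constant function $g \equiv 1$. This function satisfies $\|g\|_{A^2_\alpha} = 1$ and trivially has $|g(0)| = 1 = \sup_{z \in \D}\{|g(z)|(1-|z|^2)^{(\alpha+2)/2}\}$, since $(1-|z|^2)^{(\alpha+2)/2} \leq 1$. Because $g' \equiv 0$, one computes immediately $F_p(g) = 1$. Assuming (a), Theorem \ref{Equivalence} (applied with $C = 1$) guarantees $F_p(f) \leq 1 = F_p(g)$ for every unit-norm $f$, which is exactly what (b) asserts.

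For the reverse direction (b)$\Rightarrow$(a), given any $f$ with $\|f\|_{A^2_\alpha} = 1$, the hypothesis (b) supplies $g$ with $\|g\|_{A^2_\alpha} = 1$, with $|g(0)|$ attaining the normalized supremum, and with $F_p(f) \leq F_p(g)$. Lemma \ref{AuxLemmaEquiv} then yields $F_p(g) \leq 1$, so that $F_p(f) \leq 1$ as well. Since this holds for every unit-norm $f$, another application of Theorem \ref{Equivalence} gives contractivity of the inclusion $A^2_\alpha \subset A^p_{\frac{p}{2}(\alpha+2)-2}$.

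There is essentially no obstacle here; the content has already been absorbed by Theorem \ref{Equivalence} and Lemma \ref{AuxLemmaEquiv}. The only subtle point worth flagging in the write-up is that the constant function witnesses not only the implication (a)$\Rightarrow$(b) but also shows the condition in (b) is \emph{attainable} without further work, so the reformulation is non-vacuous. A short remark noting that the extremal function conjectured to be the Bergman kernel $K_0(z) = 1$ (up to a multiplicative constant) fits this framework could motivate why (b) is a useful reformulation, although it is not logically required for the proof.
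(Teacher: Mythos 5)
Your proposal is correct and follows exactly the paper's own argument: the constant function $1$ serves as the universal witness for (a)$\Rightarrow$(b) via Theorem \ref{Equivalence} with $C=1$, and the converse chains $F_p(f)\leq F_p(g)\leq 1$ through Lemma \ref{AuxLemmaEquiv} before applying Theorem \ref{Equivalence} again. No differences worth noting.
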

				
		\begin{proof}
			If the inclusion is contractive, then Theorem \ref{Equivalence} yields that
				\[
					F_p (f) \leq 1, \quad \mbox{ if } \quad \| f \|_{A^2_\alpha} = 1,
				\]
				an so the constant function $1$ is a possible choice of $g$.
						
			On the other hand, if we can find such a function $g$ for each normalized $f \in A^2_\alpha$, then Lemma \ref{AuxLemmaEquiv} implies that
				\[
					F_p (f) \leq F_p (g) \leq 1,
				\]
				and Theorem \ref{Equivalence} yields that the inclusion of $A^2_\alpha$ in $A^p_{\frac{p}{2}(\alpha + 2) - 2}$ is contractive.
		\end{proof}
				
		Unfortunately, the quantity $F_p(f)$ is not invariant under the isometries $T_a$ introduced in \eqref{IsometriaA2alpha}. A straightforward counterexample is given by the normalized reproducing kernels. That is, if $p > 2$, $\zeta \in \D \setminus \{ 0 \}$ and $\tilde{K}_\zeta = \frac{K_\zeta}{\| K_\zeta \|_{A^2_\alpha}}$, we see that
			\begin{eqnarray*}
				F_p ( \tilde{K}_\zeta ) < \| \tilde{K}_\zeta \|_{A^p_{\frac{p}{2}(\alpha + 2) - 2}} = 1 = F_p \big ( T_\zeta (\tilde{K}_\zeta) \big),
			\end{eqnarray*}
			since $T_\zeta (\tilde{K}_\zeta)$ is constant.
				
	\section{Explicit bounds}
				
		Finally, we will find explicit values for $C$ in order to get
			\[
				\| f \|_{A^p_{\frac{p}{2}(\alpha + 2) - 2}} \leq C \| f \|_{A^2_\alpha}, \quad \forall f \in A^2_\alpha,
			\]
			if $p$ is not an even integer.
				
		\begin{cor}\label{UniformBound}
			If $\alpha \geq -1$ and $2(k - 1) < p \leq 2k$ for some integer $k \geq 2$, then
				\[
					\| f \|_{A^p_{\frac{p}{2}(\alpha + 2) - 2}} \leq \left( \dfrac{p}{2} \right)^{\frac{1}{p}} \dfrac{1}{(k-1)^{\frac{k}{p}-\frac{1}{2}}k^{\frac{1}{2}-\frac{k-1}{p}}} \| f \|_{A^2_\alpha}.
				\]
		\end{cor}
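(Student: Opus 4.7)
The strategy is to invoke Theorem~\ref{Equivalence}: to obtain the claimed inequality it is enough to verify condition \ref{Eq3}, and in fact I will show that $G_p(f) \leq C^p$ for \emph{every} $f$ with $\|f\|_{A^2_\alpha}=1$, so no vanishing-coefficient hypothesis is required.

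The key ingredient, essentially already contained in the paper, is the family of estimates
\[
J_m(f) := \int_\D |f'(z)|^2 |f(z)|^{2(m-1)} (1-|z|^2)^{m(\alpha+2)} \, dA(z) \leq \dfrac{\alpha+2}{m} \|f\|_{A^2_\alpha}^{2m}, \qquad m = 1, 2, 3, \ldots.
\]
For $m \geq 2$ this is exactly what the inductive step in the proof of Theorem~\ref{ExtendedCarleman} produces: the quantity $I_m$ appearing there equals $\frac{m}{\alpha+2} J_m(f)$ and is bounded by $\|f\|_{A^{2(m-1)}_{(m-1)(\alpha+2)-2}}^{2(m-1)}$, which in turn is $\leq \|f\|_{A^2_\alpha}^{2(m-1)}$ by Theorem~\ref{ExtendedCarleman} itself (scale-invariance supplying the missing factor when $\|f\|_{A^2_\alpha}\neq 1$). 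The remaining case $m=1$ follows directly from Parseval's identity.

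Next comes a one-line Hölder interpolation. Set $\lambda := p/2 - (k-1) \in (0,1]$, so that $p = 2k\lambda + 2(k-1)(1-\lambda)$. A bookkeeping check on the exponents of $|f'|$, $|f|$, and $(1-|z|^2)$ confirms the pointwise factorisation
\[
|f'|^2 |f|^{p-2} (1-|z|^2)^{p(\alpha+2)/2} = \bigl[|f'|^2 |f|^{2(k-1)} (1-|z|^2)^{k(\alpha+2)}\bigr]^{\lambda} \bigl[|f'|^2 |f|^{2(k-2)} (1-|z|^2)^{(k-1)(\alpha+2)}\bigr]^{1-\lambda}.
\]
Applying Hölder with conjugate exponents $1/\lambda$ and $1/(1-\lambda)$ (legitimate because the hypothesis $k\geq 2$ makes $k-1\geq 1$) yields
\[
\int_\D |f'|^2 |f|^{p-2}(1-|z|^2)^{p(\alpha+2)/2} \, dA \leq J_k(f)^\lambda J_{k-1}(f)^{1-\lambda} \leq \dfrac{\alpha+2}{k^\lambda (k-1)^{1-\lambda}}
\]
after using $\|f\|_{A^2_\alpha}=1$. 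Multiplying through by $\frac{p}{2(\alpha+2)}$ gives $G_p(f) \leq \frac{p/2}{k^\lambda (k-1)^{1-\lambda}}$, and rewriting the exponents in terms of $p$ and $k$ recovers exactly the constant $C^p$ from the statement.

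The only mildly delicate point is bookkeeping — both the pointwise factorisation of the integrand and the explicit extraction of the inequality for $J_m$ from the proof of Theorem~\ref{ExtendedCarleman}. The hypothesis $k \geq 2$ enters solely to ensure that $J_{k-1}$ lies within the range where this auxiliary bound is available; no genuinely new analytic input beyond the extended Carleman inequality is needed.
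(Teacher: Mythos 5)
Your proof is correct and follows essentially the same route as the paper's: Hölder interpolation of the integral in $G_p$ between the two even-integer cases $m=k-1$ and $m=k$ with exponents $\lambda=\frac{p}{2}-(k-1)$ and $1-\lambda$, the endpoint bounds coming from Theorem~\ref{ExtendedCarleman} via \eqref{BetaBinom}, and then the conclusion via Theorem~\ref{Equivalence}, \ref{Eq3}$\Rightarrow$\ref{Eq1}. The only difference is cosmetic: you note that the endpoint bounds $J_m(f)\le\frac{\alpha+2}{m}\|f\|_{A^2_\alpha}^{2m}$ hold for \emph{every} $f$, so the vanishing-coefficient hypothesis and the $o(1)$ error terms appearing in the paper's write-up are in fact unnecessary.
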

				
		\begin{proof}
			Take $f \in A^2_\alpha$ such that $\| f \|_{A^2_\alpha} = 1$ and $f(0) = \ldots = f^{(m)}(0) = 0$. We are going to prove that
				\[
					I_p := \dfrac{p}{2(\alpha + 2)} \int_\D |f'(z)|^2 |f(z)|^{p-2} (1 - |z|^2)^{\frac{p(\alpha + 2)}{2}} \: dA(z) \leq \dfrac{p}{2} \dfrac{1}{(k - 1)^{k-\frac{p}{2}}k^{\frac{p}{2}-k+1}} + o(1),
				\]
				if $m \rightarrow \infty$.
				
			Note that, because of \eqref{BetaBinom} and Theorem \ref{ExtendedCarleman},
				\begin{eqnarray*}
					\int_\D |f'(z)|^2 |f(z)|^{2(k-2)} (1 - |z|^2)^{(k-1)(\alpha + 2)} \: dA(z) & \leq & \dfrac{\alpha + 2}{k - 1} + o(1), \\
					\int_\D |f'(z)|^2 |f(z)|^{2(k-1)} (1 - |z|^2)^{k(\alpha + 2)} \: dA(z) & \leq & \dfrac{\alpha + 2}{k} + o(1),
				\end{eqnarray*}
				if $m \rightarrow \infty$ uniformly in $f$.
						
			Since $p \in \big( 2(k-1), 2k \big ]$, then $p^{-1} = \theta \big( 2(k-1) \big)^{-1} + (1 - \theta) (2k)^{-1}$ with
				\[
					\theta = (k-1) \left( \dfrac{2k}{p} - 1 \right) \quad \mbox{ and } \quad 1 - \theta = k \left(1 - \dfrac{2(k-1)}{p} \right),
				\]
				and therefore Hölder's inequality yields
				\[
					I_p \leq \dfrac{p}{2} \dfrac{1}{(k-1)^{k-\frac{p}{2}}k^{\frac{p}{2}-k+1}} + o(1),
				\]
				when $m \rightarrow \infty$.
		\end{proof}
				
		For the sake of clarity, $C(p)$ will denote the bound deduced in Corollary \ref{UniformBound}.
			\[
				C(p) := \left( \dfrac{p}{2} \right)^{\frac{1}{p}} \dfrac{1}{(k-1)^{\frac{k}{p}-\frac{1}{2}}k^{\frac{1}{2}-\frac{k-1}{p}}}, \quad 2(k-1) < p \leq 2k.
			\]
					
		Because of $C(2k) = 1$ and that $C(p) \geq 1$ for all $p$, a straightforward computation shows that the maximum of $C$ in $\big (2(k-1), 2k \big ]$ is attained at $p = 2e \frac{(k-1)^k}{k^{k-1}}$, and therefore
			\[
				C(p) \leq \sqrt{\dfrac{k - 1}{k}} e^{\frac{k^{k-1}}{2e(k-1)^k}}, \quad \forall p \in \big ( 2(k - 1), 2k \big].
			\]
					
		\begin{lem} \label{MonoAuxSeq}
			The function
				\[
					\sqrt{\dfrac{x-1}{x}} e^{\frac{x^{x-1}}{2e(x-1)^x}}, \quad x > 1,
				\]
				is decreasing.
		\end{lem}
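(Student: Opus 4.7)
The plan is to take the logarithmic derivative of $h$, reduce the sign condition to a transparent exponential inequality via two successive substitutions, and close the argument with a one-line convexity estimate.

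First I would write $\ell(x):=\log h(x)=\tfrac{1}{2}\log\tfrac{x-1}{x}+\tfrac{1}{2e}L(x)$ with $L(x):=x^{x-1}/(x-1)^x$ and compute $(\log L)'(x)=\log\tfrac{x}{x-1}-\tfrac{2x-1}{x(x-1)}$. Using $\tfrac{1}{x-1}-\tfrac{1}{x}=\tfrac{1}{x(x-1)}$, one can factor
\[
\ell'(x)=\frac{1}{2x(x-1)}\left[1-\frac{L(x)}{e}\bigl((2x-1)-x(x-1)\log\tfrac{x}{x-1}\bigr)\right].
\]
The prefactor is positive for $x>1$, so $\ell'\leq 0$ is equivalent to $L(x)M(x)\geq e$, where $M(x):=(2x-1)-x(x-1)\log\tfrac{x}{x-1}$.

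Next I would substitute $u:=1/(x-1)\in(0,\infty)$. A short computation gives $L(x)=u(1+u)^{1/u}$ and reduces the target to
\[
(1+u)^{1/u}\bigl[u(u+2)-(u+1)\log(1+u)\bigr]\geq eu,\qquad u>0.
\]
A second substitution $v:=\log(1+u)$ converts $(1+u)^{1/u}$ into $e^{w}$ with $w:=v/(e^v-1)$, and factors $u(u+2)=(e^v-1)(e^v+1)$ and $(u+1)\log(1+u)=ve^v$. Dividing through by $e(e^v-1)$ and using the key identity $ve^v/(e^v-1)=v+w$, the inequality collapses to $e^v+1-v-w\geq e^{1-w}$. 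Setting $s:=1-w$, this is exactly $e^v-e^s\geq v-s$.

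To finish I would verify that $0\leq s\leq v$: both bounds simplify, after clearing denominators, to the standard estimate $e^t\geq 1+t$ (applied with $t=v$ and with $t=-v$ respectively). With $0\leq s\leq v$ in hand, the bound $e^v-e^s=\int_s^v e^t\,dt\geq v-s$ is immediate, since $e^t\geq 1$ for $t\geq 0$. The main obstacle I anticipate is spotting the double substitution $u=1/(x-1)$ followed by $v=\log(1+u)$ that turns the awkward expression $x^{x-1}/(x-1)^x$ into $e^{v/(e^v-1)}$; the algebraic identity $ve^v/(e^v-1)=v+v/(e^v-1)$ is what couples the polynomial and exponential pieces so that everything cancels cleanly. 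Once both substitutions are in place, the remaining analysis reduces to two applications of $e^t\geq 1+t$.
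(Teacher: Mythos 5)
Your proof is correct, and I checked the computations: the factorization $\ell'(x)=\tfrac{1}{2x(x-1)}\bigl[1-\tfrac{1}{e}L(x)M(x)\bigr]$ is right, the two substitutions do produce $e^w\bigl[e^v+1-v-w\bigr]\geq e$ via the identity $ve^v/(e^v-1)=v+w$, and both bounds $0\leq s\leq v$ reduce as you say to $e^{\pm v}\geq 1\pm v$, after which $e^v-e^s\geq v-s$ is immediate. The opening move coincides with the paper's: both take the logarithmic derivative and isolate the same bracket (the paper's auxiliary function $J(x)$ is exactly $-L(x)M(x)$, so your target $L(x)M(x)\geq e$ is the paper's claim $J(x)\leq -e$). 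Where you diverge is in how this inequality is established. The paper differentiates a second time, shows $J'\geq 0$ by observing that a certain quadratic trinomial in $y=\log\bigl(1+\tfrac{1}{x-1}\bigr)$ has negative discriminant, and then concludes $J\leq -e$ from monotonicity combined with the limit $J(x)\to -e$ as $x\to\infty$ (a limit the paper uses implicitly but does not spell out). You instead prove the pointwise inequality directly, with no second differentiation and no limit at infinity, by transforming it into $e^v-e^s\geq v-s$ with $0\leq s\leq v$. Your route is longer to set up but more self-contained — everything rests on $e^t\geq 1+t$ — and it also makes visible that the inequality is strict for $v>0$ (hence $h$ is strictly decreasing), whereas the paper's version requires the reader to supply the asymptotic step.
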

					
		\begin{proof}
			We are going to show that
				\[
					I(x) := \log \left( 1 - \dfrac{1}{x} \right) + \dfrac{1}{e} \left(1 + \dfrac{1}{x - 1} \right)^{x-1} \dfrac{1}{x-1},
				\]
				is decreasing in $(1, \infty)$.
						
			It is a direct computation to show that
				\[
					I' (x) = \dfrac{1}{x(x-1)} \left [1 + \dfrac{1}{e} \left( 1 + \dfrac{1}{x-1}\right)^{x-1} \left( x \log \left(1 + \dfrac{1}{x-1} \right) - 2 - \dfrac{1}{x-1} \right) \right],
				\]
				and consequently it is enough to see that
				\[
					J (x) := \left(1 + \dfrac{1}{x-1}\right)^{x-1} \left( x \log \left(1 + \dfrac{1}{x-1} \right) - 2 - \dfrac{1}{x-1} \right), \quad x > 1,
				\]
				is increasing.
							
			Similarly,
				\[
					J'(x) = \left( 1 + \dfrac{1}{x-1} \right)^{x - 1} \left( x \log^2 \left(1 + \dfrac{1}{x-1} \right) - \dfrac{2x-1}{x-1} \log \left(1+\dfrac{1}{x-1}\right) + \dfrac{x^2-x+1}{x(x-1)^2} \right).
				\]
						
			Observe that the discriminant of the quadratic trinomial
				\[
					P_x (y) := x^2 (x - 1)^2 y^2 - (2x-1) x (x-1) y + x^2 - x + 1,
				\]
				is negative. Thus, $P_x(y) \geq 0$ for all $y$ and then $J$ is increasing.
		\end{proof}
					
		From Lemma \ref{MonoAuxSeq} we deduce the uniform bound
			\[
				\| f \|_{A^p_{\frac{p}{2}(\alpha + 2)-2}} \leq \dfrac{\sqrt[e]{e}}{\sqrt{2}} \| f \|_{A^2_\alpha} = 1.021\ldots \| f \|_{A^2_\alpha}, \quad \forall f \in A^2_\alpha,
			\]
			for all $p > 2$.
						
		The best known constant so far was due to O.F. Brevig, J. Ortega-Cerdà, K. Seip and J. Zhao \cite{MR3858278}. They proved that
			\[
				\| f \|_{A^p_{\frac{p}{2}-2}} \leq \sqrt{\dfrac{2}{e\log 2}} \| f \|_{H^2} = 1.03\ldots \| f \|_{H^2}, \quad \forall f \in H^2,
			\]
			if $p \geq 4$. Thus, Corollary \ref{UniformBound} represents a slight improvement (both in the value of the uniform bound and in the range of admissible $p$ and $\alpha$) with respect to the previously published works.
					
		\begin{figure}[h!]
			\centering
			\includegraphics[scale=0.64]{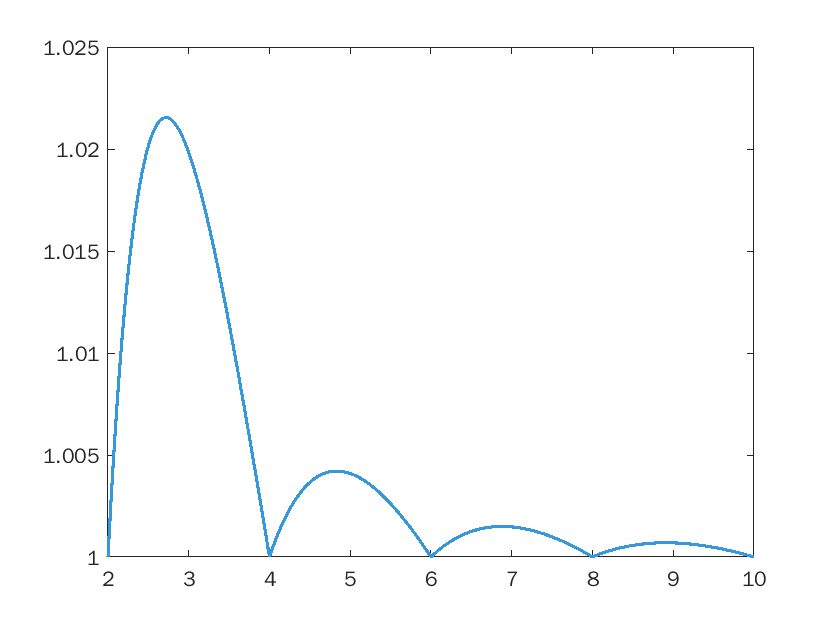}
			\caption{Plot of the function $C(p)$ in $(2,10)$ using \emph{Matlab}.}
		\end{figure}
			
		Another consequence of Corollary \ref{UniformBound} is that $H^2$ is the limit of $A^p_{\frac{p}{2}-2}$ in the sense of \eqref{LimApalpha}.
				
		\begin{cor}
			If $f \in H^2$,
				\[
					\lim_{p \rightarrow 2^+} \| f \|_{A^p_{\frac{p}{2}-2}} = \| f \|_{H^2}.
				\]
		\end{cor}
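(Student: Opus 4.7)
The plan is to prove the limit by a standard sandwich argument, using Corollary \ref{UniformBound} for the upper bound and a direct monotonicity argument for the lower bound.

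For the upper bound, observe that for $p \in (2, 4]$ we may take $k = 2$ in Corollary \ref{UniformBound} (applied with $\alpha = -1$, recalling $A^2_{-1} = H^2$), which gives $C(p) = (p/2)^{1/p}\, 2^{1/p - 1/2}$. A direct computation shows $C(p) \to 1$ as $p \to 2^+$, so $\limsup_{p \to 2^+} \| f \|_{A^p_{p/2-2}} \leq \| f \|_{H^2}$.

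For the lower bound, I would rewrite the norm in polar coordinates as
\[
    \| f \|_{A^p_{p/2 - 2}}^p = (p/2 - 1)\int_0^1 u^{p/2 - 2}\, M_p^p(\sqrt{1-u}, f)\, du,
\]
via the substitution $u = 1 - r^2$. Since $r \mapsto M_p(r, f)$ is increasing, for any fixed $r_0 \in (0, 1)$ and $u_0 = 1 - r_0^2$ we obtain
\[
    \| f \|_{A^p_{p/2 - 2}}^p \geq M_p^p(r_0, f) \cdot (p/2 - 1) \int_0^{u_0} u^{p/2-2}\, du = M_p^p(r_0, f) \cdot u_0^{p/2 - 1}.
\]

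Now let $p \to 2^+$ with $r_0$ fixed. Since $f$ is bounded on $|z| = r_0$, the $L^p$ norm $M_p(r_0, f)$ with respect to the probability measure $d\theta/(2\pi)$ is continuous in $p$ by dominated convergence, so $M_p(r_0, f) \to M_2(r_0, f)$; meanwhile $u_0^{p/2-1} \to 1$. Hence $\liminf_{p \to 2^+}\| f \|_{A^p_{p/2-2}} \geq M_2(r_0, f)$, and letting $r_0 \to 1^-$ gives $\liminf_{p \to 2^+}\| f \|_{A^p_{p/2-2}} \geq \| f \|_{H^2}$. Combined with the upper bound, this yields the claimed equality.

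There is no serious obstacle: the upper bound is essentially a restatement of Corollary \ref{UniformBound} at the endpoint, and the lower bound reduces to the monotonicity of integral means together with the trivial continuity of $L^p$-norms in $p$ on the circle. The only point worth verifying carefully is the behavior of $C(p)$ as $p \to 2^+$, but this is a direct substitution.
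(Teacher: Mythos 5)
Your argument is correct. The upper bound is exactly the paper's: both of you specialize Corollary \ref{UniformBound} to $k=2$, $\alpha=-1$ and note that $C(p)\to 1$ (the paper writes this as $\| f \|_{A^p_{p/2-2}}^p \leq \frac{p}{2^{p/2}}\|f\|_{H^2}^p$). The lower bound, however, is handled by a genuinely different route. The paper first replaces $M_p$ by $M_2$ via Hölder and then applies Chebyshev's correlation inequality (Lemma \ref{Chebyshev}, in the same-monotonicity form) to decouple the radial integral as
\[
\| f \|_{A^p_{p/2-2}}^p \;\geq\; \| f \|_{A^2_{p/2-2}}^2 \int_0^1 2r\, M_2^{p-2}(r,f)\,dr,
\]
after which it invokes the known limit \eqref{LimApalpha} together with dominated convergence. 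You instead localize the integral to the annulus $r>r_0$ using the monotonicity of $r\mapsto M_p(r,f)$, obtaining $\| f \|_{A^p_{p/2-2}} \geq M_p(r_0,f)\, u_0^{1/2-1/p}$, and then pass to the iterated limit $p\to 2^+$ followed by $r_0\to 1^-$. Both are sound; yours is more elementary and self-contained, needing neither Chebyshev's inequality nor \eqref{LimApalpha} as an input (only the definition of $\|\cdot\|_{H^2}$ as the increasing limit of $M_2(r,f)$), while the paper's decoupling yields a clean quantitative two-sided estimate valid for each fixed $p\in(2,4)$ rather than only a statement about $\liminf$.
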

			
		\begin{proof}
			Without loss of generality, assume $2 < p < 4$. Due to the monotonicity of $M_p(r,f)$, we can apply Hölder's inequality and Chebyshev's inequality (the version for two functions with the same monotonicity) to deduce
				\begin{eqnarray*}
					\| f \|_{A^p_{\frac{p}{2}-2}}^p & = & \left( \dfrac{p}{2} - 1 \right) \int_0^1 2r (1 - r^2)^{\frac{p}{2}-2} M_p^p(r, f) \: dr \\
						& \geq & \left( \dfrac{p}{2} - 1 \right) \int_0^1 2r (1 - r^2)^{\frac{p}{2}-2} M_2^p(r, f) \: dr \\
						& \geq & \left( \dfrac{p}{2} - 1 \right) \int_0^1 2r (1 - r^2)^{\frac{p}{2}-2} M_2^2(r, f) \: dr \int_0^1 2r M_2^{p - 2} (r, f) \: dr.
				\end{eqnarray*}
					
			That is, Corollary \ref{UniformBound} (for $\alpha = -1$) implies
				\[
					\| f \|_{A^2_{\frac{p}{2}-2}}^2 \int_0^1 2r M_2^{p - 2} (r, f) \: dr \leq \| f \|_{A^p_{\frac{p}{2}-2}}^p \leq \dfrac{p}{2^\frac{p}{2}} \| f \|_{H^2}^p,
				\]
				and the convergence follows from \eqref{LimApalpha} and dominated convergence theorem.
		\end{proof}
			
		We end this paper with a final observation. Because of
			\[
				\lim_{p \rightarrow 2^{+}} \| f \|_{A^p_{\frac{p}{2}(\alpha + 2) - 2}} = \| f \|_{A^2_\alpha},
			\]
			another open (and harder) problem is the following: Given $f \in A^2_\alpha$, is it true that its $A^p_{\frac{p}{2}(\alpha + 2) - 2}$ norm is a monotonic function of $p$?
				
		The best result so far on that question is due to F. Bayart, O.F. Brevig, A. Haimi, J. Ortega-Cerdà and K.-M. Perfekt \cite{MR3885157}.
				
		\begin{thmalpha}[Bayart, Brevig, Haimi, Ortega-Cerdà, Perfekt \cite{MR3885157}] Let $0 < p$ and $\beta > \frac{\sqrt{17}-7}{4}$. Then
			\[
				\| f \|_{A^{\frac{p(\beta + 3)}{\beta + 2}}_{\beta + 1}} \leq \| f \|_{A^p_\beta}, \quad \forall f \in A^p_\beta
			\]
			and equality holds if and only if
			\[
				f(z) = \dfrac{c}{(1 - \overline{\zeta} z)^\frac{2(\beta + 2)}{p}},
			\]
			for some $c \in \C$ and $\zeta \in \D$.
		\end{thmalpha}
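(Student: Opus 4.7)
My plan is to adapt the Hardy-Stein-plus-Hölder strategy developed earlier in the paper for the $A^2_\alpha$-to-$A^p$ inclusions to the more symmetric $A^p_\beta$-to-$A^q_{\beta+1}$ setting at hand. A first useful remark is that, setting $q := p(\beta+3)/(\beta+2)$, one has $(\beta+3)/q = (\beta+2)/p$, so the pointwise growth estimate \eqref{AcotacionApalpha} takes the identical form in $A^p_\beta$ and in $A^q_{\beta+1}$. This makes the family $\{c(1-\overline{\zeta}z)^{-2(\beta+2)/p}\}$ a common set of pointwise extremals and suggests that the natural analogue of the isometries $T_a$ from \eqref{IsometriaA2alpha}, namely $T_a(f)(z) := (\varphi_a'(z))^{2(\beta+2)/p} f(\varphi_a(z))$, acts isometrically on both spaces and transitively on these extremals. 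I would use it to reduce the claim to the normalized case $|f(0)| = \sup_{z \in \D} |f(z)|(1-|z|^2)^{(\beta+2)/p}$.

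With that normalization in place, I would apply the Hardy-Stein identity (Lemma~\ref{Hardy-Stein}) to both $M_p^p(r,f)$ and $M_q^q(r,f)$, integrate against $(1-r^2)^\beta$ and $(1-r^2)^{\beta+1}$ respectively, and use Fubini exactly as in the derivation preceding Theorem~\ref{Equivalence}. The outcome is that $\|f\|_{A^p_\beta}^p$ and $\|f\|_{A^q_{\beta+1}}^q$ become weighted integrals over $\D$ of $|f'|^2|f|^{p-2}$ and $|f'|^2|f|^{q-2}$ against explicit factors in $(1-|z|^2)^{\beta+2}$ and $(1-|z|^2)^{\beta+3}$. Writing $|f|^{q-2} = |f|^{p-2}\cdot|f|^{q-p}$, the shift in weight exponents is precisely the one needed for the bound \eqref{AcotacionApalpha} to convert the extra factor $|f|^{q-p}$ into a power of $(1-|z|^2)$ matching the weight on the other side.

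The heart of the proof is then a sharp Hölder step: after raising the target inequality to the $q$-th power and applying Hölder with respect to the natural probability measure induced by $|f'|^2|f|^{p-2}(1-|z|^2)^{\beta+2}\,dA(z)$, the problem should collapse to a scalar estimate. The main obstacle I anticipate is twofold. First, one must choose the pair of Hölder exponents so that equality is attained precisely when $|f(z)|$ equals $|1-\overline{\zeta}z|^{-2(\beta+2)/p}$ up to a unimodular factor, so that holomorphicity upgrades this to the exact extremal form stated. Second, the error surviving Hölder has to be tamed using the normalization and the pointwise bound; I expect it to reduce to the nonnegativity of a quadratic polynomial in $\beta$, whose roots produce exactly the numerical threshold $\beta > (\sqrt{17}-7)/4$. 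Pinning down this quadratic while keeping the constant sharp enough to preserve the equality case is, in my view, the delicate part of the argument.
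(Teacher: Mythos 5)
First, a point of orientation: the paper does not prove this theorem at all --- it is quoted from \cite{MR3885157} as a known result, with no internal argument to compare against --- so your proposal has to stand entirely on its own. As written, it does not: it is a programme rather than a proof. The two things you actually verify (the exponent identity $(\beta+3)/q=(\beta+2)/p$ for $q=p(\beta+3)/(\beta+2)$, and the availability of Hardy--Stein representations of both norms as area integrals of $|f'|^2|f|^{p-2}$ against explicit weights) are correct and are indeed the natural opening moves. But every step that makes the theorem true is deferred: the ``sharp H\"older step'' that is supposed to ``collapse to a scalar estimate'', the identification of the quadratic in $\beta$, and the verification that its nonnegativity is exactly the condition $\beta>(\sqrt{17}-7)/4$ are all announced as expectations (``I would'', ``I anticipate'', ``I expect'') rather than carried out. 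The threshold $(\sqrt{17}-7)/4$ \emph{is} the content of the theorem; an argument that does not derive it has not proved the statement, and nothing in your outline explains why the method should fail for $\beta$ below that value, which is the acid test of whether the right inequality has been found.

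There are also two concrete technical problems. The isometry of $A^p_\beta$ analogous to \eqref{IsometriaA2alpha} is $T_a(f)=(\varphi_a')^{(\beta+2)/p}\,f\circ\varphi_a$, not $(\varphi_a')^{2(\beta+2)/p}\,f\circ\varphi_a$; the exponent $2(\beta+2)/p$ belongs to the extremal kernels, and differs by a factor of two because $|\varphi_a'(z)|=(1-|a|^2)/|1-\overline{a}z|^2$ carries the square of $|1-\overline{a}z|$. More seriously, the reduction to the normalization $|f(0)|=\sup_{z\in\D}|f(z)|(1-|z|^2)^{(\beta+2)/p}$ followed by the pointwise bound \eqref{AcotacionApalpha} is precisely the mechanism the paper itself uses in Lemma \ref{AuxLemmaEquiv}, and there it only controls the sub-functional $F_p$, not the full norm: the residual term (the analogue of $I_1$ in the proof of Theorem \ref{Equivalence}) must be handled by a separate comparison of the two Hardy--Stein weights $\int_0^1 s^{\cdot}\big(1-(1-|z|^2)s\big)^{-1}\,ds$ appearing on the two sides. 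It is that weight comparison --- not the H\"older step over your probability measure --- that generates the quadratic and the restriction on $\beta$, and it is nowhere written down. The equality case suffers from the same defect: one must track when \eqref{AcotacionApalpha} is saturated and when the weight inequality is an identity, and upgrade this to the stated form of $f$ by holomorphy; asserting that a suitable choice of H\"older exponents will make this happen is not a proof.
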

				
		Choosing $\beta = \frac{p}{2}(\alpha + 2) - 2$, we have
			\[
				\| f \|_{A^{p + \frac{2}{\alpha + 2}}_{\frac{p + 2/(\alpha + 2)}{2}(\alpha + 2)-2}} \leq \| f \|_{A^p_{\frac{p}{2}(\alpha + 2)-2}}.
			\]
			
		In other words, the monotonicity is true whenever we take ``steps of length $\frac{2}{\alpha + 2}$''.
		
	\section*{Acknowledgments} 
		
		The author wants to thank Professors D. Vukoti\'{c} and K. Seip for the interesting discussions about the problem treated in this paper and also to O.F. Brevig and K.-M. Perfekt for their comments. In general, the author also wants to thank the members of Department of Mathematical Sciences at NTNU for their hospitality during his stay there from August to November of 2021. This stay was funded by grant EST21/00461 from MU (Spain). The author is partially supported by grant PID2019-106870GB-I00 from MICINN and by MU Fellowship, reference number FPU17/00040.
		
	\nocite{MR0268655}
	\bibliographystyle{}
	\bibliography{Bibliografia}

\end{document}